\documentclass[letterpaper, 10 pt, conference]{ieeeconf}  % Comment this line out if you need a4paper

\IEEEoverridecommandlockouts                              % This command is only needed if 
\usepackage{cite}
\usepackage{amsmath,amssymb,amsfonts}
  \usepackage{amsthm}

\usepackage{algorithmic}
\usepackage{graphicx}
\usepackage{textcomp}
\usepackage{xcolor}
\usepackage{mathrsfs}
\usepackage{array}
\usepackage{multirow} 
\usepackage{subfigure}
\usepackage{graphicx}
\usepackage{algorithm}
\usepackage{algorithmic}
\usepackage{float}
\usepackage{color,xcolor}
\usepackage{array}
\usepackage{arydshln}
\usepackage{multirow} 
\usepackage{float}

\def \R {\mathbb{R}}

\def \N {\mathcal{N}}

\def \bb {{\bf b}}
\def \ba {{\bf a}}

\def \E {\mathbb{E}}
\def \O {\mathcal{O}}

\theoremstyle{plain}
\newtheorem{THEOREM}{Theorem}[section]

\newtheorem{theorem}[THEOREM]{Theorem}

\newtheorem{lemma}[THEOREM]{Lemma}

\theoremstyle{remark}
\newtheorem{example}[THEOREM]{Example}
\newtheorem*{assumption*}{Assumption}
\newtheorem*{remark*}{Remark}

\allowdisplaybreaks[4]

\title{\LARGE \bf
Error Estimates of the Gain Approximation by Hermite-Galerkin  Method in Feedback Particle Filter*
}

\author{Ruoyu Wang$^{1}$, {\it IEEE student member}, Peng Sun$^{1}$ and Xue Luo$^{1,2,\sharp}$, {\it IEEE senior member}% <-this % stops a space
\thanks{*This work is financially supported by National Natural Science Foundation of China (Grant No. 12271019) and the National Key R\&D Program of China (Grant No. 2022YFA1005103).}% <-this % stops a space
\thanks{$^{1}$ All the authors are with School of Mathematical Sciences, Beihang University, Beijing, P. R. China 102206. 
        {\tt\small WangRY@buaa.edu.cn, 1512530585@buaa.edu.cn}}
\thanks{$^{2}$ X. Luo is also with the Key Laboratory of Mathematics, Informatics and Behavioral Semantics (LMIB), Beihang University, Beijing, P. R. China 100191.
        {\tt\small xluo@buaa.edu.cn}}%
\thanks{$^{\sharp}$X. Luo is the corresponding author.}
}

\begin{document}

\maketitle
\thispagestyle{empty}
\pagestyle{empty}

%%%%%%%%%%%%%%%%%%%%%%%%%%%%%%%%%%%%%%%%%%%%%%%%%%%%%%%%%%%%%%%%%%%%%%%%%%%%%%%%
\begin{abstract}
The feedback particle filter (FPF) is a promising nonlinear filtering (NLF) method, but its practical implementation is hindered by the intractability of the gain function, which satisfies a boundary value problem (BVP). This paper proposes a novel two-step Hermite-Galerkin spectral method to address this challenge. First, the unknown density in the BVP is approximated by a kernel density estimator, whose error bounds are well-established in the literature. Second, rather than directly approximating the gain function, we approximate an auxiliary variable via the Galerkin spectral method using generalized Hermite functions. This auxiliary variable inherits the rapid decay property of the density at infinity, which aligns perfectly with the exponential decay characteristic of generalized Hermite functions, thereby obviating the need for artificial boundary conditions or domain truncation. Furthermore, we rigorously establish two fundamental error estimates: the kernel approximation error decays at the rate \(\O(N_p^{-\frac{s}{2s+1}})\), while the spectral approximation error converges at \(\O(M^{-s+1}\log M)\), providing complete theoretical guarantees for the method’s accuracy. Comprehensive numerical experiments validate the theoretical results and demonstrate that the proposed method outperforms existing gain approximation schemes in both accuracy and computational efficiency.
\end{abstract}

%%%%%%%%%%%%%%%%%%%%%%%%%%%%%%%%%%%%%%%%%%%%%%%%%%%%%%%%%%%%%%%%%%%%%%%%%%%%%%%%
\section{Introduction}\label{sec-1}

The nonlinear filtering (NLF) problem for diffusion processes is formulated as
\begin{equation}\label{eqn-NLF}
\left\{\begin{aligned}
dX_t &= g(X_t)dt+\sigma(X_t)dB_t,\\
dZ_t &= h(X_t)dt+dW_t,
\end{aligned}\right.
\end{equation}
where $X_t\in\mathbb{R}^d$ is the system state, $Z_t\in\mathbb{R}^m$ is the observation, and $\{B_t\}$, $\{W_t\}$ are mutually independent standard Wiener processes. The coefficients $g:\mathbb{R}^d\to\mathbb{R}^d$, $h:\mathbb{R}^d\to\mathbb{R}^m$, and $\sigma:\mathbb{R}^d\to\mathbb{R}^{d\times d}$ are continuous functions; $g$ and $\sigma$ are Lipschitz continuous.

Conventional approaches such as particle filters (PF) and extended Kalman filters either suffer from prohibitive computational complexity or experience severe performance degradation under strong nonlinearity. Introduced in 2013, the feedback particle filter (FPF) has emerged as an attractive alternative to conventional PF, owing to its intrinsic feedback control structure. In the FPF, each particle evolves according to a controlled stochastic differential equation
\begin{equation}\label{eqn-2.20}
dX_t^i = g(X_t^i)dt+\sigma(X_t^i)dB_t^i+K(X_t^i,t)dZ_t+u(X_t^i,t)dt,
\end{equation}
where $X_t^i$ denotes the $i$-th particle's state at time $t$. The gain function $K$ in \eqref{eqn-2.20} satisfies the Euler-Lagrange boundary value problem (BVP)
\begin{equation}\label{eqn-BVP}
\operatorname{div}(p_tK)=-\left(h-\hat{h}\right)^Tp_t,
\end{equation}
with the boundary condition $\displaystyle\lim_{|x|\to\infty}p_tK=0$, 
\begin{equation}\label{eqn-1.1}
    \hat{h}=\int_{\mathbb{R}^d}h(x)p_t(x)dx,
    \end{equation}
    and
\begin{equation}\label{eqn-u}
    u=-\frac12K\left(h+\hat h\right)+\frac12\sum_{k=1}^d\sum_{s=1}^mK_{ks}\frac{\partial K_{ls}}{\partial x_k}.
\end{equation}
As established in \cite{YMM:13}, the optimal gain $K$, the solution to \eqref{eqn-BVP}, ensures that the empirical distribution of particles coincides with the true conditional state density $p_t$. Surace et al. \cite{SKP:19} further validated numerically that the optimal gain function allows FPF to outperform nearly all conventional NLF methods.
Nonetheless, as highlighted in \cite{TM:16}, solving \eqref{eqn-BVP} remains a formidable challenge, for two key reasons:
\begin{enumerate}
    \item[1)] In general, \eqref{eqn-BVP} does not admit a closed-form solution, even when $p_t$ is assumed known. The true density $p_t$ is precisely the target of the NLF problem and thus is inherently unavailable.
    \item[2)] Accordingly, one must first construct a reliable approximation of $p_t$, and then accurately and efficiently solve \eqref{eqn-BVP} using this approximate density.
\end{enumerate}

Several efforts have been devoted along this line. As proposed in \cite{YMM:13}, the true density \(p_t\) can be approximated by its empirical counterpart \(p_t^{(N_p)}\), whose almost-sure convergence to \(p_t\) is guaranteed by the law of large numbers. By substituting \(p_t^{(N_p)}\) for \(p_t\) in \eqref{eqn-BVP}, various numerical gain approximations have been developed. The constant-gain approximation \cite{YMM:13,YLMM:16} assumes a uniform gain across all particles.
A Galerkin-type scheme was first introduced in \cite{YLMM:13}, with polynomial basis functions adopted in \cite{TM:16}, which suffers from limited scalability and potential Gibbs oscillations in high dimensions.
Berntorp et al. \cite{BG:16} developed a proper orthogonal decomposition approach to bypass prescribed basis functions. Taghvaei et al. \cite{TM:16,TMM:20} presented kernel-based methods that avoid explicit basis construction entirely. A comprehensive comparison of these gain approximation techniques for FPF was provided in \cite{B:18}. In our recent work \cite{WML:25}, we derived an exact gain function for FPF under polynomial observation models by decomposing the BVP \eqref{eqn-BVP} into two exactly solvable components.
This framework was further extended to multivariate NLF problems with polynomial observations in \cite{WL:25}.

Over the past decades, Galerkin spectral methods have stood out in solving NLF problems owing to their exponential convergence properties.
The last author and her collaborator \cite{LY:13HSM} first introduced an on- and off-line algorithm based on the Hermite spectral method to solve the Duncan-Mortensen-Zakai equation for the conditional state density. Later, Legendre-Galerkin schemes on bounded domains were investigated in \cite{DLY:21}. Furthermore, NLF problems with correlated noises have been addressed via Galerkin spectral methods in \cite{ZZCZC:22,SY:23}. Most recently, deep learning‑augmented spectral approximations were studied in \cite{SJY:25}. These studies consistently demonstrate the superior performance of spectral methods in approximating conditional densities.

In this paper, to address the two aforementioned challenges in solving the BVP \eqref{eqn-BVP} for the gain function, we propose a novel approach consisting of the following two key steps:
\begin{enumerate}
    \item[1)] Approximate the unknown density \(p_t\) using the kernel density estimator \(p_t^{N_p,\epsilon}\) (see \eqref{kernel}), whose error bounds have been well-documented in the nonparametric statistics literature.
    \item[2)] Approximate the auxiliary variable \(p_t^{N_p,\epsilon}K\) (on the left-hand side of \eqref{eqn-BVP}) as an integral quantity via the Galerkin spectral method, employing generalized Hermite functions (see \eqref{eqn-2.1}).
\end{enumerate}
A critical distinction between our Hermite-Galerkin method and existing Galerkin-type approaches in \cite{YLMM:13,TM:16} lies in the choice of the auxiliary variable \(p_t^{N_p,\epsilon}K\) to be approximated. This choice is not only theoretically justified (see Theorem \ref{thm-3.2}) but also computationally advantageous, as demonstrated numerically in Section \ref{sec-4}. Specifically, the auxiliary variable inherits the rapid decay property of the kernel density estimator \(p_t^{N_p,\epsilon}\) at infinity, which aligns perfectly with the exponential decay characteristic of generalized Hermite functions. This alignment obviates the need for artificial boundary conditions or domain truncation. Furthermore, the kernel density estimator is more suitable than the empirical distribution $p_t^{(N_p)}$, as the error estimate in Theorem \ref{thm-3.2} depends on the regularity of \(f_{N_p}\), the solution to \eqref{eqn-BVP-F}.

The main contributions of this paper are twofold: 1) The development of a Hermite–Galerkin spectral method for the numerical solution of the gain function in the FPF, which is governed by the BVP \eqref{eqn-BVP}. 2) The derivation of rigorous error bounds for the two successive approximations employed-namely, kernel density estimation of the conditional density and Galerkin approximation of the auxiliary variable-thereby providing solid theoretical guarantees for the approximation accuracy.
Comprehensive numerical experiments are carried out to verify the theoretical error estimates, and the performance of the FPF equipped with the proposed gain approximation is systematically compared with other representative schemes on a benchmark NLF problem.

The organization of this paper is as follows: Section \ref{sec-2} reviews the definitions, key properties of generalized Hermite functions, and the associated Sobolev spaces, while also stating lemmas related to the existing error estimates of kernel density estimators. Section \ref{sec-3} develops the Hermite-Galerkin method and establishes two error estimates (Theorem \ref{thm-3.1} and Theorem \ref{thm-3.2}) to rigorously characterize the approximation accuracy. Section \ref{sec-4} presents several numerical experiments to verify the theoretical results, illustrate the effectiveness of the proposed method, and compare its performance with existing approaches. Finally, concluding remarks and future research directions are summarized in Section \ref{sec-5}.

\section{Preliminary}\label{sec-2}

In this section, we shall recall some definitions and well-known results.

\subsection{Generalized Hermite functions and Sobolev Space}

The generalized Hermite functions \cite{STW:11} are defined as
\begin{equation}\label{eqn-2.1}
	\tilde{H}_{n}(x)=\frac{1}{{\pi}^{1/4}\sqrt{2^{n}n!}}e^{-x^{2}/2}{H}_{n}(x), 
\end{equation}
for \(n=0,1,\cdots\) and \(x\in\mathbb{R}\), where \(H_n(x)\) denotes the Hermite polynomial of degree \(n\). These functions are mutually orthogonal, i.e.,
\begin{equation*}
    \int_{-\infty}^{+\infty}\tilde{H}_m(x)\tilde{H}_n(x)dx=\delta_{mn},
\end{equation*}
where \(\delta_{mn}\) is the Kronecker delta. Moreover, they inherit the three-term recursion relation from Hermite polynomials:
\begin{align}\label{hermite_functions}\notag
     &\tilde{H}_{0}(x)=\pi^{-\frac{1}{4}}e^{-x^2/2}, \quad \tilde{H}_{1}(x)=\sqrt{2}\pi^{-\frac{1}{4}}xe^{-x^2/2},\\
     &\tilde{H}_{n+1}(x)=\sqrt{\frac{2}{n+1}}x\tilde{H}_{n}(x)-\sqrt{\frac{n}{n+1}}\tilde{H}_{n-1}(x),   
\end{align}
for \(n=1,2,\cdots\). The derivative of \(\tilde{H}_n(x)\) satisfies
\begin{align}\label{HFD}\notag
    \tilde{H}'_{n}(x)&=\sqrt{2n}\tilde{H}_{n-1}(x)-x\tilde{H}_{n}(x)\\
    &\overset{\eqref{hermite_functions}}{=}\sqrt{\frac{n}{2}}\tilde{H}_{n-1}(x)-\sqrt{\frac{n+1}{2}}\tilde{H}_{n+1}(x).
\end{align}

The set of generalized Hermite functions \(\{\tilde{H}_n\}_{n\geq0}\) forms a complete orthonormal basis for \(L^2(\mathbb{R})\), providing a natural framework for analyzing function regularity and constructing finite-dimensional approximation subspaces. For any function \(f\in L^2(\mathbb{R})\), its Hermite expansion is given by
\begin{equation}\label{eqn-2.3}
    f(x) = \sum_{m=0}^{\infty}\hat{f}_{m} \tilde{H}_{m}(x),
\end{equation}
with expansion coefficients \(\hat{f}_{m} := \int_{\mathbb{R}} f(x)\tilde{H}_{m}(x)dx\).

For \(r\ge 0\), we define the Sobolev space
\begin{equation}\label{eqn-2.2}
    H^r(\mathbb{R}) := \Big\{ f\in L^2(\mathbb{R}) : \|f\|_r^2 := \sum_{m=0}^{\infty} (m+1)^r |\hat{f}_{m}|^2 < \infty \Big\},
\end{equation}
where the index \(r\) characterizes the regularity of \(f\). For integer \(r\), the norm \(\|f\|_r\) is equivalent to the standard Sobolev norm \(\|f\|_{H^r}\).

Let \(R_M = \textup{span}\{\tilde{H}_0,\cdots,\tilde{H}_M\}\) denote the linear subspace spanned by the first \(M+1\) generalized Hermite functions. The \(L^2\)-orthogonal projection \(P_M: L^2(\mathbb{R})\to R_M\) is expressed as
\begin{equation}\label{eqn-2.4}
    P_M f(x) = \sum_{m=0}^M \hat{f}_{m} \tilde{H}_{m}(x). 
\end{equation}

The following lemma quantifies the projection error on \(R_M\) in the Sobolev norm, which depends on \(M\) and the regularity of \(f\).
\begin{lemma}[Projection Error]\label{Projection error}
    Let \(f\in H^r(\mathbb{R})\) with \(r\ge 0\) and \(0\le \mu \le r\). Then 
    \begin{equation}\label{f-P_N f}
        \|f - P_M f\|_{H^\mu} \le CM^{\frac{\mu-r}{2}} \|f\|_r, 
    \end{equation}
   for some constant \(C=C(\mu,r)>0\) that is independent of \(M\).
\end{lemma}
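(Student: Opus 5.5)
The plan is to reduce everything to the spectral characterization of the Sobolev norm in \eqref{eqn-2.2}, using the stated equivalence between \(\|\cdot\|_\mu\) and \(\|\cdot\|_{H^\mu}\). For non-integer \(\mu\), I take \(\|\cdot\|_\mu\) as the definition of the norm, or equivalently use interpolation between integers. With this reduction it suffices to show
\[
\|f-P_Mf\|_\mu \le C M^{\frac{\mu-r}{2}}\|f\|_r .
\]

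By orthonormality of \(\{\tilde H_m\}\), the error has the expansion \(f-P_Mf=\sum_{m>M}\hat f_m\tilde H_m\). Therefore
\[
\|f-P_Mf\|_\mu^2=\sum_{m=M+1}^\infty (m+1)^\mu|\hat f_m|^2=\sum_{m=M+1}^\infty (m+1)^{\mu-r}(m+1)^r|\hat f_m|^2 .
\]
Since \(\mu-r\le 0\), the factor \((m+1)^{\mu-r}\) is nonincreasing in \(m\). On the range \(m\ge M+1\) it is bounded by \((M+2)^{\mu-r}\le M^{\mu-r}\). This gives
\[
\|f-P_Mf\|_\mu^2\le M^{\mu-r}\|f\|_r^2,
\]
and taking square roots yields the estimate.

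The final step is to return to the standard \(H^\mu\) norm. For integer \(\mu\), I invoke the norm equivalence stated after \eqref{eqn-2.2}. This is where the constant \(C(\mu,r)\) enters, and it does not depend on \(M\).

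The only real obstacle is justifying that equivalence, which the paper takes as known. The key ingredient is that \(-\partial_x^2+x^2\) is diagonal in the \(\tilde H_m\) basis with eigenvalue \(2m+1\); this follows from \eqref{HFD}. Using it, one bounds \(\|\partial_x^k g\|_{L^2}\) by \(\|g\|_k\) through the ladder relations \eqref{HFD}. I would cite \cite{STW:11} for this fact rather than reprove it. Non-integer \(\mu\) would follow by interpolation.
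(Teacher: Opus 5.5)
Your argument is correct, but it does not follow the paper's route: the paper gives no proof and simply cites Theorem 2.1 of \cite{LY:13SIAM}. You split the claim into two elementary pieces.

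The first piece is the tail-sum estimate $\|f-P_Mf\|_\mu^2=\sum_{m>M}(m+1)^{\mu-r}(m+1)^r|\hat f_m|^2\le M^{\mu-r}\|f\|_r^2$, which holds with constant $1$ in the spectral norm \eqref{eqn-2.2}. The second piece is an embedding of the Hermite-weighted space into the standard Sobolev space. This makes the lemma transparent. It also shows that the only case the paper actually uses, $\mu=0$ in \eqref{eqn-3.16}, needs no norm comparison at all.

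Correct one point in your second step: do not invoke the two-sided equivalence asserted after \eqref{eqn-2.2}, and do not attribute it to \cite{STW:11}, because it is false. Since the $\tilde H_m$ are eigenfunctions of $-\partial_x^2+x^2$ with eigenvalues $2m+1$, the norm $\|g\|_r$ is equivalent to $\big(\sum_{j+k\le r}\|x^j\partial_x^k g\|_{L^2}^2\big)^{1/2}$, which also controls decay at infinity. For example, $g(x)=(1+x^2)^{-1/2}$ lies in every $H^r(\R)$, yet $\|xg\|_{L^2}=\infty$, so $\|g\|_1=\infty$.

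What you need is only the one-sided bound $\|g\|_{H^k}\le C_k\|g\|_k$, and your ladder-relation sketch proves exactly that. By \eqref{HFD}, the $k$-th Hermite coefficient of $g'$ is $\sqrt{(k+1)/2}\,\hat g_{k+1}-\sqrt{k/2}\,\hat g_{k-1}$. This gives $\|g'\|_j^2\le(1+2^j)\|g\|_{j+1}^2$, and iterating yields the integer case. Non-integer $\mu$ then follows by interpolating the identity map between the two Hilbert scales. Applying this to $g=f-P_Mf$ finishes the proof, with a constant depending only on $\mu$.
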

The proof of this lemma can be found in Theorem 2.1 of \cite{LY:13SIAM}.

\subsection{Some error estimates of kernel density estimators}

As discussed in Section \ref{sec-1}, the true probability density \(p(x)\) generally does not admit an analytical expression in NLF problems. To obtain an approximate solution to \eqref{eqn-BVP}, a suitable approximation to \(p(x)\) must be employed. In this paper, we adopt the kernel density estimator for this purpose. Let \(X^1,\cdots,X^{N_p}\) be i.i.d.\ samples drawn from \(p(x)\); the kernel density estimator with bandwidth \(\epsilon>0\) is then defined as
\begin{equation}\label{kernel}
    p^{N_p,\epsilon}(x) := \frac{1}{N_p}\sum_{i=1}^{N_p} K_\epsilon(x - X^i),
\end{equation}
where \(K_\epsilon(u) := \epsilon^{-1}K(u/\epsilon)\). All error estimates derived in this paper hold for any kernel satisfying the stated conditions, while a Gaussian kernel is used in the numerical experiments of Section \ref{sec-4}.

In the nonparametric statistics literature, rigorous error bounds have been established for both the mean integrated squared error (MISE) and the expected \(L^1\)-error. Below, we summarize these results in two lemmas, and slightly extend the \(L^1\)-error bound to its squared version, which is essential for the error analysis in Section \ref{sec-3.2}.

The following lemma provides the MISE bound for the kernel estimator.

\begin{lemma}[Theorem 1.3, \cite{T:09}]\label{error of kernel density estimation}
    Let \(p\in H^s(\mathbb{R})\) be a probability density such that \(p^{(s)}(x)\in L^2(\mathbb{R})\). Suppose \(K_\epsilon:\mathbb{R}\to\mathbb{R}\) is a kernel of order \(l=\lfloor s-1\rfloor\) with bandwidth \(\epsilon>0\), satisfying
    \begin{equation}\label{eqn-2.5}
        \int_\mathbb{R} K_\epsilon(u)\,du=1,\quad
        \int_\mathbb{R} u^j K_\epsilon(u)\,du=0,\quad j=1,2,\cdots,l,
    \end{equation}
    together with \(\int_\mathbb{R} K_\epsilon^2(u)\,du<\infty\) and \(\int_\mathbb{R}|u|^s K_\epsilon(u)\,du<\infty\). Then for all \(N_p\ge1\) and \(\epsilon>0\), the MISE of \(p^{N_p,\epsilon}\) in \eqref{kernel} satisfies
    \begin{align*}
        \textup{MISE}:=&\mathbb{E}\|p^{N_p,\epsilon}-p\|_{L^2}^2\\
        \le&\,\frac1{N_p\epsilon}\int_\mathbb{R} K_\epsilon^2(u)\,du\\
        &+\frac{\epsilon^{2s}}{(l!)^2}\left(\int_\mathbb{R}|u|^s|K_\epsilon(u)|\,du\right)^2
        \|p^{(s)}\|_{L^2(\mathbb{R})}^2.
    \end{align*}
    In particular, with the optimal bandwidth
    \begin{equation}\label{eq:optimal-bandwidth}
        \epsilon_{N_p}^* = O\left(N_p^{-\frac{1}{2s+1}}\right),
    \end{equation}
    we have
    \[
        \textup{MISE} \le C N_p^{-\frac{2s}{2s+1}},
    \]
    for some constant \(C=C\big(s,K_{\epsilon^*_{N_p}},\|p^{(s)}\|_{L^2(\mathbb{R})}\big)>0\).
\end{lemma}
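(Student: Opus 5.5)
The plan is to use the classical bias--variance decomposition of the MISE, following the argument of \cite{T:09}. Write $K$ for the base kernel, so that $K_\epsilon(u)=\epsilon^{-1}K(u/\epsilon)$. By Fubini,
\[
\mathbb{E}\|p^{N_p,\epsilon}-p\|_{L^2}^2=\int_\mathbb{R}\operatorname{Var}\big(p^{N_p,\epsilon}(x)\big)\,dx+\int_\mathbb{R}\big(\mathbb{E}p^{N_p,\epsilon}(x)-p(x)\big)^2dx ,
\]
and the two integrals will be bounded separately.

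\textbf{Variance term.} The $X^i$ are i.i.d., so
\[
\operatorname{Var}\big(p^{N_p,\epsilon}(x)\big)\le \frac{1}{N_p}\,\mathbb{E}\,K_\epsilon^2(x-X^1)=\frac{1}{N_p\epsilon^2}\int_\mathbb{R} K^2\Big(\frac{x-z}{\epsilon}\Big)p(z)\,dz .
\]
I will integrate in $x$ first, substituting $u=(x-z)/\epsilon$, and then use $\int p=1$. This gives the bound $\frac{1}{N_p\epsilon}\int K^2(u)\,du$. This is the stated first term, once $\int K_\epsilon^2$ in the statement is read as the square-integral of the base kernel.

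\textbf{Bias term.} Using $\int K=1$, the bias is
\[
b(x)=\mathbb{E}p^{N_p,\epsilon}(x)-p(x)=\int K(u)\big(p(x-u\epsilon)-p(x)\big)\,du .
\]
Next I expand $p(x-u\epsilon)$ by Taylor's formula to order $l$ with integral remainder:
\[
p(x-u\epsilon)=\sum_{j=0}^{l}\frac{(-u\epsilon)^j}{j!}p^{(j)}(x)+\frac{(-u\epsilon)^{l+1}}{l!}\int_0^1(1-\tau)^{l}\,p^{(l+1)}(x-\tau u\epsilon)\,d\tau .
\]
The vanishing moments \eqref{eqn-2.5} remove the polynomial terms $j=1,\dots,l$, and the $j=0$ term cancels against $p(x)$. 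For integer $s=l+1$ (the case the statement's $l=\lfloor s-1\rfloor$ indicates), only the remainder survives. I then apply the generalized Minkowski inequality in $L^2(dx)$ and use translation invariance of the $L^2$ norm, $\|p^{(s)}(\cdot-\tau u\epsilon)\|_{L^2}=\|p^{(s)}\|_{L^2}$. Together with $\int_0^1(1-\tau)^{l}d\tau\le1$, this yields
\[
\|b\|_{L^2}\le \frac{\epsilon^{s}}{l!}\int|u|^s|K(u)|\,du\,\|p^{(s)}\|_{L^2},
\]
which is the second term after squaring.

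\textbf{Optimal bandwidth.} The bound has the form $A/(N_p\epsilon)+B\epsilon^{2s}$. Minimizing over $\epsilon$ gives $\epsilon^*\asymp N_p^{-1/(2s+1)}$. Substituting back, both terms are of order $N_p^{-2s/(2s+1)}$, which proves the final claim.

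The main obstacle is the bias step. Justifying the Taylor remainder and the Minkowski interchange needs only $p^{(s)}\in L^2$, which is given. The harder case is non-integer $s$, where one must instead use a fractional-smoothness (Nikol'ski) version of the remainder. I would handle that case by citing \cite{T:09}, which states the result under exactly these hypotheses.
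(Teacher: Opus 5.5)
Your proof is correct for integer $s$ and follows the standard argument behind the result of \cite{T:09}, which the paper cites without proof: the variance is bounded via Tonelli and the substitution $u=(x-z)/\epsilon$, and the bias via the integral Taylor remainder, generalized Minkowski and translation invariance. You also rightly read $\int K_\epsilon^2$ and $\int|u|^s|K_\epsilon|$ in the statement as base-kernel quantities. You should withdraw your aside on non-integer $s$, however: there \cite{T:09} requires a kernel of order $\lfloor s\rfloor$, not $\lfloor s-1\rfloor$, and with the lower order the bound fails (Gaussian $p$ and Gaussian $K$ with $s=5/2$ give a squared bias of exact order $\epsilon^4\gg\epsilon^5$), so the lemma must be read for integer $s$, as $p^{(s)}$ and $(l!)^2$ already suggest.
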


The expected squared \(L^1\)-error is also needed in the proof of Theorem \ref{thm-3.1}. While \(\mathbb{E}\|p-p^{N_p,\epsilon}\|_{L^1(\mathbb{R})}\) was analyzed in Theorem 12 of \cite{DG:85}, we provide a slight extension to \(\mathbb{E}\|p-p^{N_p,\epsilon}\|_{L^1(\mathbb{R})}^2\) in the next lemma.

\begin{lemma}[Squared \(L^1\)-error bound]\label{L1 Error Bound for Kernel Density Estimation}
Let \(p \in H^s(\mathbb{R})\) be a probability density satisfying \(\int_{\mathbb{R}} \sqrt{p(x)}\,dx < \infty\) and
\(C_s := \liminf_{a\downarrow 0} \|(p * \phi_a)^{(s)}\|<\infty\), where \(\phi_a\) denotes a standard mollifier. Let the kernel \(K_\epsilon\) satisfy the same conditions as in Lemma \ref{error of kernel density estimation}. Assume further that the bandwidth \(\epsilon=\epsilon_{N_p}\to0\) such that \(N_p \epsilon_{N_p}\to\infty\), as $N_p\to\infty$. Then
\begin{equation}\label{eq:l1-square-bound}
    \mathbb{E}\|p^{N_p,\epsilon_{N_p}} - p\|_{L^1(\mathbb{R})}^2
    \le \left(C_1 \epsilon_{N_p}^{2s} + \frac{C_2}{N_p \epsilon_{N_p}}\right)(1+o(1)),
\end{equation}
where $C_1 = 2C_s^2 \left(\int_{\mathbb{R}}|L(z)|\,dz\right)^2$, $
L(z) = \frac{(-1)^s}{(s-1)!}\int_z^\infty (y-z)^{s-1}K(y)\,dy$ and $
C_2 = 2\|K\|_{L^2(\mathbb{R})}^2 \left(\int_{\mathbb{R}}\sqrt{p(z)}\,dz\right)^2$. 

In particular, with the optimal bandwidth \(\epsilon_{N_p}^*\) in \eqref{eq:optimal-bandwidth}, the estimator achieves the optimal convergence rate
\begin{equation}\label{eq:optimal-rate}
\mathbb{E}\|p^{N_p,\epsilon_{N_p}^*} - p\|_{L^1(\mathbb{R})}^2 = O\left(N_p^{-\frac{2s}{2s+1}}\right).
\end{equation}
\end{lemma}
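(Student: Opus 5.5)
The approach is a bias–variance decomposition of the \(L^1\)-error, squared with the elementary inequality \((a+b)^2\le 2a^2+2b^2\). This is what produces the factors \(2\) in \(C_1\) and \(C_2\). Write \(\bar p_\epsilon:=\E p^{N_p,\epsilon}=K_\epsilon*p\). The triangle inequality gives
\[
\|p^{N_p,\epsilon}-p\|_{L^1(\R)}\le \|\bar p_\epsilon-p\|_{L^1(\R)}+\|p^{N_p,\epsilon}-\bar p_\epsilon\|_{L^1(\R)},
\]
and hence
\[
\E\|p^{N_p,\epsilon}-p\|_{L^1(\R)}^2\le 2\|\bar p_\epsilon-p\|_{L^1(\R)}^2+2\,\E\|p^{N_p,\epsilon}-\bar p_\epsilon\|_{L^1(\R)}^2 .
\]
The bias term is deterministic, so it suffices to square the bias bound of Theorem 12 in \cite{DG:85}. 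The proof therefore reduces to bounding the stochastic term in second moment rather than first.

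For the bias, I follow \cite{DG:85}. First mollify \(p\) to \(p*\phi_a\). Then Taylor-expand with integral remainder to order \(s\), using the vanishing moment conditions \eqref{eqn-2.5}. Integrating by parts, the remainder is \(\epsilon^s\) times a convolution of \(L_\epsilon\) with \((p*\phi_a)^{(s)}\). Young's inequality then gives \(\|\bar p_\epsilon-p\|_{L^1}\le \epsilon^s\|L\|_{L^1}\|(p*\phi_a)^{(s)}\|+o(\epsilon^s)\). Letting \(a\downarrow0\) along a sequence realizing the \(\liminf\) yields \(\epsilon^s C_s\int|L|(1+o(1))\). Squaring this bound gives the term \(C_1\epsilon^{2s}(1+o(1))\).

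For the variance term, Fubini and Cauchy–Schwarz in \(\omega\) give
\[
\E\|p^{N_p,\epsilon}-\bar p_\epsilon\|_{L^1}^2=\E\Big(\int|p^{N_p,\epsilon}-\bar p_\epsilon|dx\Big)^2=\iint \E\big[|D(x)||D(y)|\big]dxdy\le\Big(\int\sqrt{\mathrm{Var}\,p^{N_p,\epsilon}(x)}\,dx\Big)^2 ,
\]
where \(D=p^{N_p,\epsilon}-\bar p_\epsilon\). Thus the second moment is controlled by exactly the quantity \(\int\sqrt{\mathrm{Var}}\) that appears in the first-moment analysis. Next,
\[
\mathrm{Var}\,p^{N_p,\epsilon}(x)\le \frac1{N_p}\int K_\epsilon^2(x-y)p(y)dy=\frac{1}{N_p\epsilon}\big(\tilde K_\epsilon*p\big)(x),
\]
with \(\tilde K=K^2\). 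It remains to show
\[
\int\sqrt{(\tilde K_\epsilon *p)(x)}\,dx\to\|K\|_{L^2}\int\sqrt{p}.
\]
This is the key step and the main obstacle: concavity of \(\sqrt{\cdot}\) does not immediately give the limit. I would first try Jensen's inequality with respect to the probability measure \(K^2/\|K\|_2^2\). This gives \(\sqrt{\tilde K_\epsilon*p}\le \|K\|_2\big(\tfrac{K^2}{\|K\|_2^2}\big)_\epsilon*\sqrt p\), where the subscript \(\epsilon\) denotes the same rescaling as \(K_\epsilon\). Integrating, \(\int\sqrt{\tilde K_\epsilon*p}\le \|K\|_2\int\sqrt p\) holds exactly, for every \(\epsilon\). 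Hence the variance contribution is at most \(\frac{1}{N_p\epsilon}\|K\|_2^2(\int\sqrt p)^2\). After multiplying by \(2\), this gives \(C_2/(N_p\epsilon)\), with the \(1+o(1)\) absorbing lower-order corrections, for instance from the centering term \((\bar p_\epsilon)^2/N_p\).

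Combining the two bounds gives \eqref{eq:l1-square-bound}. Substituting \(\epsilon^*_{N_p}\asymp N_p^{-1/(2s+1)}\) from \eqref{eq:optimal-bandwidth} balances \(\epsilon^{2s}\) against \((N_p\epsilon)^{-1}\). Both terms are then of order \(N_p^{-2s/(2s+1)}\), which yields \eqref{eq:optimal-rate}. The conditions \(\epsilon\to0\) and \(N_p\epsilon\to\infty\) are needed only so that the \(o(1)\) terms from the bias expansion and the mollifier limit vanish.
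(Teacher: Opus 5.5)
Your route is the paper's: the factor-$2$ bias/variance split, squaring the Devroye--Gy\"orfi bias bound, the reduction $\E\|p^{N_p,\epsilon}-\bar p_\epsilon\|_{L^1}^2\le\bigl(\int\sqrt{\mathrm{Var}\,p^{N_p,\epsilon}(x)}\,dx\bigr)^2$, and the pointwise variance bound are all fine. The gap is at the step you yourself called the main obstacle: your Jensen inequality points the wrong way. Take $\nu:=K^2/\|K\|_{L^2}^2$, which is a probability density, and $\nu_\epsilon(u)=\epsilon^{-1}\nu(u/\epsilon)$. Concavity of $\sqrt{\cdot}$ then gives
\[
\sqrt{(\tilde K_\epsilon*p)(x)}=\|K\|_{L^2}\Bigl(\int p(x-u)\,\nu_\epsilon(u)\,du\Bigr)^{1/2}\ \ge\ \|K\|_{L^2}\,(\nu_\epsilon*\sqrt p)(x).
\]
Integrating yields $\int\sqrt{\tilde K_\epsilon*p}\,dx\ge\|K\|_{L^2}\int\sqrt p$ for every $\epsilon$, generally with strict inequality. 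For example, with $K=\mathbf{1}_{[-1/2,1/2]}$, $\epsilon=1$ and $p=L^{-1}\mathbf{1}_{[0,L]}$, the left side is $(L+\tfrac13)/\sqrt L$ and the right side is $\sqrt L$. So the bound you claim holds ``exactly, for every $\epsilon$'' is false. What you have actually shown is only the lower-bound (the $\liminf$) half of the limit you need.

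What the statement requires, and what its factor $(1+o(1))$ is there to absorb, is $\limsup_{\epsilon\to0}\int\sqrt{(\tilde K_\epsilon*p)(x)}\,dx\le\|K\|_{L^2}\int\sqrt p$. This needs a genuine limiting argument with two ingredients:
\begin{enumerate}
\item pointwise convergence $\tilde K_\epsilon*p\to\|K\|_{L^2}^2\,p$, which holds here because $p\in H^s$, $s\ge2$, is bounded and continuous;
\item an $\epsilon$-independent integrable majorant for $\sqrt{\tilde K_\epsilon*p}$, which $\int\sqrt p<\infty$ by itself does not supply.
\end{enumerate}
For instance, if $K$ is supported in $[-1,1]$ and $\epsilon\le1$, then $(\tilde K_\epsilon*p)(x)\le\|K\|_{L^2}^2\sup_{|y-x|\le1}p(y)$. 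An extra tail assumption such as $\int\sup_{|y-x|\le1}\sqrt{p(y)}\,dx<\infty$ then lets dominated convergence finish the proof; some assumption of this kind must be added or derived.

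Be aware that the paper's appendix does not fill this hole either. It passes through $\int\sqrt{\int K_\epsilon(x-y)^2p(y)\,dy}\,dx\le\int\sqrt{p(y)}\bigl(\int K_\epsilon(x-y)^2dx\bigr)^{1/2}dy$, whose right side is $\|K_\epsilon\|_{L^2}\int\sqrt p$. That is the same reversed inequality. It is not an instance of Minkowski's integral inequality, which only lets you move an $L^2$ norm outside an $L^1$ norm; it does not let you interchange the two variables in $\int_x(\int_y F^2\,dy)^{1/2}dx$.
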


To streamline the presentation, a sketch of the proof is deferred to the appendix.

\section{Hermite-Galerkin approximation and error estimates\label{sec-3}}

In this section, we shall detail the Hermite-Galerkin method for \eqref{eqn-BVP}. As introduced in Section \ref{sec-1}, two approximations have been made:
\begin{enumerate}
    \item The unknown probability density $p_t$ is approximated by the kernel density estimator \eqref{kernel} and 
\begin{equation}\label{eqn-hat hN}
    \hat{h}^{N_p} :=\int_{\R}h(y)p^{N_p,\epsilon}(y)dy.
\end{equation}
    \item The Galerkin spectral method with generalized Hermite function \eqref{eqn-2.1} is used to obtain the approximate solution, denoted as $f_{N_p,M}$,
    to 
    \begin{equation}\label{eqn-BVP-F}
    f_{N_p}'(x)=-\left(h-\hat{h}^{N_p}\right)p^{N_p,\epsilon},
\end{equation}
where $M$ is the truncation in the spectral method.    
\end{enumerate}

 Since the time $t$ is fixed when solving \eqref{eqn-BVP}, we shall omit this subscription in the sequel. In Section \ref{sec-3.1}, we shall first formulate the Hermite-Galerkin spectral method for \eqref{eqn-BVP-F}. Then the error estimates of this method has been analyzed in Section \ref{sec-3.2} with respect to the number of the particles $N_p$ and the truncation $M$, respectively.

\subsection{Hermite-Galerkin method for \eqref{eqn-BVP-F}}\label{sec-3.1}

The weak formulation of \eqref{eqn-BVP-F} is 
\begin{equation}\label{eqn-weak-F}
\int_{\mathbb{R}} f'(x)\varphi(x)dx = -\int_{\mathbb{R}} (h(x)-\hat{h}^{N_p})p^{N_p,\epsilon}(x)\varphi(x)dx,
\end{equation}
for any test function $\varphi\in L_2(\mathbb{R})$. The Galerkin method is to find an approximate solution 
\begin{equation}\label{F_M}
   f_{N_p,M}(x)=\sum_{m=0}^M a_{m} \tilde{H}_{m}(x)\in R_M
\end{equation}
such that \eqref{eqn-weak-F} holds.

By choosing $\varphi(x)=H_l(x)$, and substituting \eqref{F_M} into \eqref{eqn-weak-F}, one has
\begin{align}\label{hermite-appro-total}\notag
    &-\int_{\mathbb{R}} (h(x)-\hat{h})p^{N_p,\epsilon}(x)\tilde{H}_{l}(x)dx\\\notag
    =&\sum_{m=0}^M a_{m} \int_{\mathbb{R}} \tilde{H}'_{m}(x)\tilde{H}_{l}(x)dx \\\notag
    \overset{\eqref{HFD}}=&\sum_{m=0}^M a_{m}\int_{\R}\left(\sqrt{\frac m2}\tilde H_{m-1}(x)-\sqrt{\frac{m+1}2}\tilde H_{m+1}(x)\right)\\\notag
    &\phantom{aaaaaaaaa}\cdot\tilde H_l(x)dx \\
    =&a_{l+1}\sqrt{\frac{l+1}2}-a_{l-1}\sqrt{\frac l2},
\end{align}
for $l=0,\cdots,M+1$, where $a_{-1}\equiv0$ by convention. Let us denote $\ba =(a_0,\cdots,a_M)$, if rewritten \eqref{hermite-appro-total} into matrix form, i.e. $A\ba=\bb$, where $A$ is a tri-diagonal matrix with $A_{l,l-1}=-\sqrt{\frac l2}$ and $A_{l,l+1}=\sqrt{\frac{l+1}2}$, and 
\begin{equation}\label{eqn-3.9}
    \bb_l:=-\int_{\mathbb{R}} (h(x)-\hat{h}^{N_p})p^{N_p,\epsilon}(x)\tilde{H}_{l}(x)dx,
\end{equation}
$l=0,\cdots,M+1$. Therefore, if $A$ is invertible, then $\ba=A^{-1}\bb$. Consequently, the gain function is 
\begin{equation}\label{K}
    K(x)\approx\frac{f_{N_p,M}(x)}{p^{N_p,\epsilon}(x)}
    \overset{\eqref{F_M}}=\frac{\sum_{m=0}^M a_{m} \tilde{H}_{m}(x)}{p^{N_p,\epsilon}(x)}.
\end{equation}

%$A\in\mathbb{R}^{(M+1)\times(M+1)}$ and the right‑hand side vector $b(t)\in\mathbb{R}^{M+1}$ by
%\begin{equation}
    %A_{mj} = -\int_{\mathbb{R}} \tilde{H}_j(x)\tilde{H}_m'(x)\,dx, 
%\end{equation}
%\begin{equation}
%b_m = \int_{\mathbb{R}} (h(x)-\hat{h})p_t\tilde{H}_m(x)\,dx.
%\end{equation}
%Using \eqref{HFD} and orthonormality we obtain
%\begin{equation}
%A_{mj} = 
%\left\{\begin{aligned}
%&-\sqrt{\frac{j}{2}}, m=j-1,\\
%&\sqrt{\frac{j+1}{2}}, m=j+1,\\
%&0, \text{otherwise}.
%\end{aligned}\right.,
%\end{equation}
%i.e., $A$ is a skew‑symmetric tridiagonal matrix independent of $t$. 

%We make the following assumptions:
%\begin{enumerate}
    %\item The true density satisfies $p_t \in H^s(\mathbb{R})$ with $s > 1/2$, i.e.,  
   %\begin{equation}
       %\int_{\mathbb{R}} |\partial_x^s p(x)|^2 \,dx < \infty.
   %\end{equation}
   %This condition ensures that, under an optimal bandwidth, the kernel density estimator achieves the $L^2$ convergence rate $n^{-s/(2s+1)}$, which is minimax optimal.
   %\item The function $h$ satisfies $h \in L^\infty(\mathbb{R}) \cap L^2(\mathbb{R})$ and is smooth. This assumption guarantees the boundedness of the operator $\mathcal{L}$ on $L^2(\mathbb{R})$; specifically, there exists a constant $C_\mathcal{L}>0$ such that  
   %$$
   %\|\mathcal{L}p_t\|_{L^2} \le C_\mathcal{L} %\|p_t\|_{L^2}, \forall p_t \in L^2.
   %$$  
   %\item The function $F = \mathcal{L}p$ belongs to the weighted Sobolev space $W^r(\mathbb{R})$, whose norm is defined by  
   %\begin{equation}
      % \|F\|_r^2 = \sum_{k=0}^\infty (k+1)^r |\hat{F}_k|^2,
   %\end{equation}
%\end{enumerate}

\subsection{Error estimates}\label{sec-3.2}

In this subsection, we shall analyze the errors introduced by the two steps mentioned at the beginning of Section \ref{sec-3}. In 1), the error is from the approximation of kernel density estimator. In 2), the error is from the truncation in the spectral method. 

\subsubsection{The error from kernel density estimator}

Let $f(x)=p(x)K(x)$ and $f_{N_p}(x)$ be the solution to \eqref{eqn-BVP} and  \eqref{eqn-BVP-F}, respectively.

\begin{theorem}\label{thm-3.1}
Assume that $p\in H^{s}(\mathbb{R})$, $s\geq2$, and $p^{N_p,\epsilon}$ is the kernel density estimator \eqref{kernel} based on $N_p$ i.i.d. particles $\{X^1,\cdots,X^{N_p}\}$ sampled from $p$ with the kernel $K_{\epsilon_{N_p}^*}$ satisfying the conditions in Lemma \ref{error of kernel density estimation} with the optimal bandwidth in \eqref{eq:optimal-bandwidth}. Suppose further that $h\in L^{2}(\mathbb{R})$. Then 
\begin{equation*}
\mathbb{E}\big|f(x)-f_{N_p}(x)\big|
\leq CN_p^{-\frac{s}{2s+1}},
\end{equation*}
for some constant $C>0$ depends on $s$, $K$, $\|h\|_{L^{2}(\mathbb{R})}$, $|\hat{h}|$, $\|p^{(s)}\|_{L^{2}(\mathbb{R})}$, $\int_\R\sqrt{p(y)}dy,L$ and $\|K\|_{L^2(\R)}$.
\end{theorem}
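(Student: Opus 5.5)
Since the problem is one-dimensional, both \eqref{eqn-BVP} and \eqref{eqn-BVP-F} can be integrated explicitly, and the argument reduces to an integral comparison. With the boundary condition $\lim_{x\to-\infty}f(x)=0$ (and likewise for $f_{N_p}$, which holds because $p^{N_p,\epsilon}$ integrates to one and $\hat h^{N_p}$ is its $h$-mean), we have
\begin{equation*}
f(x)=-\int_{-\infty}^{x}\big(h(y)-\hat h\big)p(y)\,dy,\qquad
f_{N_p}(x)=-\int_{-\infty}^{x}\big(h(y)-\hat h^{N_p}\big)p^{N_p,\epsilon}(y)\,dy .
\end{equation*}
Subtracting and adding $\pm\,(h-\hat h)p^{N_p,\epsilon}$ gives the pointwise bound
\begin{equation*}
|f(x)-f_{N_p}(x)|\le \int_{\mathbb{R}}|h(y)-\hat h|\,|p(y)-p^{N_p,\epsilon}(y)|\,dy+|\hat h-\hat h^{N_p}|\int_{\mathbb{R}}p^{N_p,\epsilon}(y)\,dy .
\end{equation*}
This bound is uniform in $x$. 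If $K_\epsilon$ is not assumed nonnegative, the last integral should be read as $\int_{\mathbb{R}}|p^{N_p,\epsilon}|$; it is bounded by $\|K\|_{L^1}$, and it equals $1$ for a nonnegative kernel.

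Next we bound each term by norms of $p-p^{N_p,\epsilon}$. By Cauchy--Schwarz, the first term splits as
\begin{equation*}
\int_{\mathbb{R}}|h|\,|p-p^{N_p,\epsilon}|\,dy\le\|h\|_{L^2}\|p-p^{N_p,\epsilon}\|_{L^2},\qquad
\int_{\mathbb{R}}|\hat h|\,|p-p^{N_p,\epsilon}|\,dy=|\hat h|\,\|p-p^{N_p,\epsilon}\|_{L^1}.
\end{equation*}
For the second term, $\hat h-\hat h^{N_p}=\int h\,(p-p^{N_p,\epsilon})$, so again $|\hat h-\hat h^{N_p}|\le\|h\|_{L^2}\|p-p^{N_p,\epsilon}\|_{L^2}$. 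Altogether,
\begin{equation*}
|f(x)-f_{N_p}(x)|\le C\big(\|h\|_{L^2}\|p-p^{N_p,\epsilon}\|_{L^2}+|\hat h|\,\|p-p^{N_p,\epsilon}\|_{L^1}\big).
\end{equation*}

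We then take expectations and apply Jensen's inequality, $\mathbb{E}\|\cdot\|\le(\mathbb{E}\|\cdot\|^2)^{1/2}$. Lemma \ref{error of kernel density estimation} with the optimal bandwidth \eqref{eq:optimal-bandwidth} gives $\mathbb{E}\|p-p^{N_p,\epsilon}\|_{L^2}\le C N_p^{-s/(2s+1)}$. Lemma \ref{L1 Error Bound for Kernel Density Estimation}, specifically \eqref{eq:optimal-rate}, gives $\mathbb{E}\|p-p^{N_p,\epsilon}\|_{L^1}\le C N_p^{-s/(2s+1)}$; the optimal bandwidth satisfies $\epsilon\to0$ and $N_p\epsilon\to\infty$, as that lemma requires. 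Combining these yields the claimed rate. The constant depends on $\|h\|_{L^2}$, $|\hat h|$, $\|p^{(s)}\|_{L^2}$, $\int\sqrt p$, $L$, and $\|K\|_{L^2}$, which matches the list in the statement.

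The main obstacle is the $L^1$ term. Since $h$ is only assumed to lie in $L^2$, the constant $\hat h$ cannot be paired with an $L^2$ norm, so the squared $L^1$ bound of Lemma \ref{L1 Error Bound for Kernel Density Estimation} is indispensable; its hypotheses ($\int\sqrt p<\infty$ and $C_s<\infty$) have to be imported implicitly. A second point needing care is the well-definedness of the integral representation. One must check that $f_{N_p}(x)\to0$ as $x\to+\infty$ as well, which is exactly the compatibility condition $\int (h-\hat h^{N_p})p^{N_p,\epsilon}=0$, and that $hp$ and $hp^{N_p,\epsilon}$ are integrable. Integrability follows from $h\in L^2$ and $p,p^{N_p,\epsilon}\in L^2$, the latter since $p\in H^s$ and $\int K^2<\infty$.
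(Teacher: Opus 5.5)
Your proof is correct. It differs from the paper's in how $f-f_{N_p}$ is split, and your splitting is a bit more economical.

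The paper writes $\hat h p-\hat h^{N_p}p^{N_p,\epsilon}=(\hat h-\hat h^{N_p})p+\hat h^{N_p}(p-p^{N_p,\epsilon})$. This leaves the $L^1$ error multiplied by the \emph{random} factor $|\hat h^{N_p}|$. To take expectations, the paper splits $|\hat h^{N_p}|\le|\hat h|+|\hat h^{N_p}-\hat h|$. It then bounds the product $\mathbb{E}\bigl[|\hat h^{N_p}-\hat h|\,\|p-p^{N_p,\epsilon}\|_{L^1}\bigr]$ by Cauchy--Schwarz in the probability variable. That step is the only reason it needs the squared $L^1$ estimate of Lemma~\ref{L1 Error Bound for Kernel Density Estimation}.

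You instead add and subtract $(h-\hat h)p^{N_p,\epsilon}$. This puts the deterministic constant $\hat h$ in front of the $L^1$ error. In front of $|\hat h-\hat h^{N_p}|$ it leaves only the deterministic factor $\int|p^{N_p,\epsilon}|\le\|K\|_{L^1}$, which equals $1$ for the Gaussian kernel. Your pathwise bound is therefore linear in $\|p-p^{N_p,\epsilon}\|_{L^2}$ and $\|p-p^{N_p,\epsilon}\|_{L^1}$, and no product of random quantities ever appears.

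So, contrary to your closing remark, the squared $L^1$ bound is not indispensable for your route. The classical first-moment estimate $\mathbb{E}\|p-p^{N_p,\epsilon}\|_{L^1}=O(N_p^{-s/(2s+1)})$ (Theorem 12 of \cite{DG:85}) suffices. You only pass through Lemma~\ref{L1 Error Bound for Kernel Density Estimation} via Jensen for convenience.

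The price of your splitting is the factor $\|K\|_{L^1}$ when signed higher-order kernels are needed, as for $s\ge3$. The paper keeps the true density $p$ in that term and never touches $|p^{N_p,\epsilon}|$, but it pays with the extra cross-term argument.

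One small imprecision: vanishing at $-\infty$ comes directly from the integral representation. The normalization of $p^{N_p,\epsilon}$ is what gives vanishing at $+\infty$, as you correctly say later. Your explicit check of that compatibility condition, and of the integrability of $hp^{N_p,\epsilon}$, covers points the paper passes over silently.
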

\begin{proof}
Let us take the difference of $f(x)$ and $f_{N_p}(x)$:
\begin{align}\label{eqn-3.1}\notag
    &f(x) - f_{N_p}(x)\\\notag
    \overset{\eqref{eqn-BVP},\eqref{eqn-BVP-F}}=&\int_{-\infty}^x \left[ -(h-\hat{h}) p(y) + (h-\hat{h}^{N_p}) p^{N_p,\epsilon}(y) \right] dy \\\notag
    =& \int_{-\infty}^x -h \left[ p(y) - p^{N_p,\epsilon}(y) \right] dy \\\notag
    &+ \int_{-\infty}^x \left( \hat{h} p(y) - \hat{h}^{N_p} p_t^{N_p,\epsilon}(y) \right) dy\\\notag
    =&\int_{-\infty}^x -h \left[ p(y) - p^{N_p,\epsilon}(y) \right] dy
    +\int_{-\infty}^x(\hat h-\hat h^{N_p})p(y)dy\\
    &+\hat h^{N_p}\int_{-\infty}^x (p(y)-p^{N_p,\epsilon}(y))dy
\end{align}
under the boundary condition $\displaystyle\lim_{y\to-\infty} [f(y)-f_{N_p}(y)] = 0$. Thus, by H\"older's inequality, we have
\begin{align}\label{eqn-3.2}\notag
&\left| f(x) - f_{N_p}(x) \right|\\\notag
%\leq &\left| \int_{-\infty}^x h \left[ p_t(y) - p_t^{N_p,\epsilon}(y) \right] dy \right|\\
%+ & \left| \int_{-\infty}^x \left( \hat{h} p_t(y) - \hat{h}^{N_p} p_t^{N_p,\epsilon}(y) \right) dy \right| \\
%\leq &\|h\|_{L^2(\mathbb{R})} \left\| p_t - p_t^{N_p,\epsilon} \right\|_{L^2(\mathbb{R})} \\
%+& \left| \int_{-\infty}^x \left( \hat{h} - \hat{h}^{N_p} \right) p_t(y)
%+ \hat{h}^{N_p} \left( p_t(y) - p_t^{N_p,\epsilon}(y) \right) dy \right|.\\
\overset{\eqref{eqn-3.1}}\leq &\|h\|_{L^2(\mathbb{R})} \left\| p- p^{N_p,\epsilon} \right\|_{L^2(\mathbb{R})}\\\notag
&+ \int_{\mathbb{R}} \left| \hat{h} - \hat{h}^{N_p}\right|p(y) dy
+ \hat{h}^{N_p} \int_{\mathbb{R}} \left| p(y) - p^{N_p,\epsilon}(y)  \right| dy \\\notag
\leq &\|h\|_{L^2(\mathbb{R})} \left\| p- p^{N_p,\epsilon} \right\|_{L^2(\mathbb{R})}
+\left| \hat{h} - \hat{h}^{N_p} \right| \\\notag
&+|\hat{h}^{N_p}| \left\| p - p^{N_p,\epsilon} \right\|_{L^1(\mathbb{R})}\\\notag
\overset{\eqref{eqn-1.1},\eqref{eqn-hat hN}}=&\|h\|_{L^2(\mathbb{R})} \left\| p - p^{N_p,\epsilon} \right\|_{L^2(\mathbb{R})}
+ \left| \int_{\mathbb{R}} h \left( p - p^{N_p,\epsilon} \right) dy \right| \\
&+|\hat{h}^{N_p} |\left\| p - p^{N_p,\epsilon} \right\|_{L^1(\mathbb{R})}\\\notag
\leq &2\|h\|_{L^2(\mathbb{R})} \left\| p - p^{N_p,\epsilon} \right\|_{L^2(\mathbb{R})}
+ |\hat{h}^{N_p}| \left\| p - p^{N_p,\epsilon} \right\|_{L^1(\mathbb{R})}.
\end{align}
Taking expectation on the both sides of \eqref{eqn-3.2}, we have
\begin{align}\label{eqn-3.3}\notag
    &\E|f(x)-f_{N_p}(x)|\\\notag
    \leq& 2\|h\|_{L^2(\R)}\E\left\| p - p^{N_p,\epsilon} \right\|_{L^2(\mathbb{R})}
    +\E\left[|\hat{h}^{N_p}| \left\| p - p^{N_p,\epsilon} \right\|_{L^1(\mathbb{R})}\right]\\\notag
    \leq& 2\|h\|_{L^2(\R)}\E\left\| p - p^{N_p,\epsilon} \right\|_{L^2(\mathbb{R})} +|\hat h|\E\left\| p - p^{N_p,\epsilon} \right\|_{L^1(\mathbb{R})}\\
    &+\E\left[|\hat{h}^{N_p}-\hat h| \left\| p - p^{N_p,\epsilon} \right\|_{L^1(\mathbb{R})}\right].
\end{align}
The last term on the right-hand side of \eqref{eqn-3.3} can be controlled by 
\begin{align}\label{eqn-3.4}\notag
    &\E\left[|\hat{h}^{N_p}-\hat h| \left\| p - p^{N_p,\epsilon} \right\|_{L^1(\mathbb{R})}\right]\\
    \leq&\left[\E|\hat{h}^{N_p}-\hat h|^2\right]^{\frac12}\left[\E\left\| p - p^{N_p,\epsilon} \right\|^2_{L^1(\mathbb{R})}\right]^{\frac12}\\\notag
    \leq&\|h\|_{L^2(\R)}\left[\E\left\| p - p^{N_p,\epsilon} \right\|^2_{L^2(\mathbb{R})}\right]^{\frac12}\left[\E\left\| p - p^{N_p,\epsilon} \right\|^2_{L^1(\mathbb{R})}\right]^{\frac12},
\end{align}
by H\"older's inequality again and the last inequality follows from 
\begin{align*}
    \E|\hat{h}^{N_p}-\hat h|^2
    =&\E\left|\int_{\R}h(p(y)-p^{n_p,\epsilon}(y))dy\right|^2\\
    \leq&\|h\|_{L^2(\R)}^2\E\left\| p - p^{N_p,\epsilon} \right\|^2_{L^2(\mathbb{R})}.
\end{align*}
By Lemma \ref{error of kernel density estimation} and \ref{L1 Error Bound for Kernel Density Estimation}, we have
\begin{align}\label{eqn-3.5}\notag
    &\E\left\| p - p^{N_p,\epsilon} \right\|_{L^2(\mathbb{R})}\\\notag
    \leq&\left[\E\left\| p - p^{N_p,\epsilon} \right\|^2_{L^2(\mathbb{R})}\right]^{\frac12}\left(\int_\R p(y)dy\right)^{\frac12}\\
    \leq& \left[\E\left\| p - p^{N_p,\epsilon} \right\|^2_{L^2(\mathbb{R})}\right]^{\frac12}
    \leq CN_p^{-\frac s{2s+1}},
\end{align}
so does $\E\left\| p - p^{N_p,\epsilon} \right\|_{L^1(\mathbb{R})}\leq C N_p^{-\frac s{2s+1}}$. Therefore, by substituting \eqref{eqn-3.4} and \eqref{eqn-3.5} back into \eqref{eqn-3.3}, one has
\begin{align*}
    &\E|f(x)-f_{N_p}(x)|\\
    \leq&2\|h\|_{L^2(\R)}\E\left\| p - p^{N_p,\epsilon} \right\|_{L^2(\mathbb{R})} +|\hat h|\E\left\| p - p^{N_p,\epsilon} \right\|_{L^1(\mathbb{R})}\\
    &+\|h\|_{L^2(\R)}\left[\E\left\| p - p^{N_p,\epsilon} \right\|^2_{L^2(\mathbb{R})}\right]^{\frac12}\\
        &\phantom{aa}\cdot\left[\E\left\| p - p^{N_p,\epsilon} \right\|^2_{L^1(\mathbb{R})}\right]^{\frac12}
    \leq CN_p^{-\frac s{2s+1}},
\end{align*}
where $C>0$ depends on the quantities in Lemma \ref{error of kernel density estimation} and \ref{L1 Error Bound for Kernel Density Estimation}.
\end{proof}

\subsubsection{The error from the truncation}

In this subsection, our goal is to estimate the error  $\mathbb{E}\|f_{N_p,M}-f_{N_p}\|^2_{L^2(\R)}$, where $f_{N_p}$ and $f_{N_p,M}(x)\in R_M$ are the solution to \eqref{eqn-BVP-F} and its approximation using Hermite-Galerkin method, respectively. 

\begin{theorem}\label{thm-3.2}
Let $p\in H^s(\mathbb{R})$, $s\geq2$. The observation function $h(x)$ such that $\bb_l$ in \eqref{eqn-3.9} are well-defined, for all $l=0,\cdots,M+1$. Then 
\begin{equation}\label{eqn-3.18}
    \E\|f_{N_p,M} - f_{N_p}\|^2_{L^2(\R)} 
    \leq CM^{-s+1}\log M\E\|f_{N_p}\|_s^2,
\end{equation}
for some generic constant $C>0$ and $\log(\cdot)$ represents natural logarithm.
\end{theorem}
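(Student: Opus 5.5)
The plan is to split the error through the orthogonal projection $P_M$ and then control the discrete part using the structure of the tridiagonal matrix $A$:
\begin{equation*}
f_{N_p,M}-f_{N_p}=(f_{N_p,M}-P_Mf_{N_p})+(P_Mf_{N_p}-f_{N_p}).
\end{equation*}
The second term is handled directly by Lemma \ref{Projection error} with $\mu=0$, $r=s$. This gives $\|P_Mf_{N_p}-f_{N_p}\|_{L^2}^2\le CM^{-s}\|f_{N_p}\|_s^2$, which is already better than the claimed rate. So the work lies in the first term, which lives in $R_M$. By orthonormality, its $L^2$ norm equals the Euclidean norm of the coefficient difference $\mathbf{e}:=\ba-\hat{\mathbf f}$, where $\hat{\mathbf f}=(\hat f_0,\dots,\hat f_M)$ are the Hermite coefficients of $f_{N_p}$.

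To get an equation for $\mathbf e$, I would test the exact equation \eqref{eqn-BVP-F} against $\tilde H_l$. Using \eqref{HFD} and integrating by parts (boundary terms vanish by the decay of $f_{N_p}$), the exact coefficients satisfy
\begin{equation*}
\sqrt{\tfrac{l+1}{2}}\hat f_{l+1}-\sqrt{\tfrac{l}{2}}\hat f_{l-1}=\bb_l
\end{equation*}
for every $l$, with the same right-hand side \eqref{eqn-3.9}. Subtracting the Galerkin system, the error satisfies the same recursion with zero data, except in the last rows. There the truncation leaves a residual involving $\hat f_{M+1}$ (and $\hat f_{M+2}$ at row $l=M+1$), each multiplied by a factor of size $\sqrt{M}$. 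Thus $A\mathbf e=\mathbf r$, where $\mathbf r$ is supported on the last one or two indices and
\begin{equation*}
|\mathbf r|^2\le CM\big(|\hat f_{M+1}|^2+|\hat f_{M+2}|^2\big)\le CM\cdot M^{-s}\|f_{N_p}\|_s^2,
\end{equation*}
where the last step uses the definition \eqref{eqn-2.2} of $\|\cdot\|_s$.

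The main obstacle is bounding $A^{-1}$ (interpreted appropriately, since the square system need not be square-invertible) applied to a vector supported at the top index. I would solve the two-term recursion explicitly rather than bound the operator norm. From the recursion, starting at the top and stepping down by two, the components satisfy $e_{l-1}=\sqrt{(l+1)/l}\,e_{l+1}$, with the even–odd parity decoupling. Then $|e_{M-2k}|^2$ equals $|\mathbf r_{\rm top}|^2/M$ times a product of ratios of the form $\prod(l+1)/l$. I would bound these products, carefully or crudely, by a harmonic-type sum, and then
\begin{equation*}
\sum_k|e_{M-2k}|^2\le C\frac{|\mathbf r|^2}{M}\sum_{j\le M}\frac{M}{j}\lesssim|\mathbf r|^2\log M.
\end{equation*}
This is where the $\log M$ factor should arise. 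The delicate point is to check that the telescoping products do not give a worse power; if a naive bound fails, I would instead use the conserved quantity $l\,e_{l-1}^2-(l+1)e_{l+1}^2$ type identities.

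Combining the pieces gives $\|f_{N_p,M}-P_Mf_{N_p}\|_{L^2}^2\le CM^{-s+1}\log M\,\|f_{N_p}\|_s^2$. Adding the projection error and taking expectations over the particles (all constants are deterministic) yields \eqref{eqn-3.18}.
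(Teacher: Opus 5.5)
Your proposal is correct and is essentially the paper's proof. It uses the same split through $P_M$, the same error recursion $e_{l-1}=\sqrt{(l+1)/l}\,e_{l+1}$ (rows $1,\dots,M-1$), the same terminal errors at indices $M-1$ and $M$ fixed by $\hat f_{M+1}$ and $\hat f_{M+2}$ through rows $M$ and $M+1$, and the same telescoping bound of the products by $(M+1)/(m+1)$, whose harmonic sum gives the $\log M$; writing this as $A\mathbf e=\mathbf r$ is only bookkeeping, and your worry about the products is unfounded, since they are of order $\sqrt{M/(m+1)}$ (so a sharper estimate would even remove the logarithm, which neither you nor the paper needs).
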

\begin{proof}
This error is divided into two parts: 
\begin{align}\label{eqn-3.8}
   &\E\|f_{N_p,M} - f_{N_p}\|^2_{L^2(\R)} \\\notag
    =& \E\|f_{N_p,M} - P_Mf_{N_p}\|^2_{L^2(\R)} + \E\|P_Mf_{N_p} - f_{N_p}\|^2_{L^2(\R)},
\end{align}
where $P_Mf_{N_p}$ is the projection of $f_{N_p}$ onto the subspace $R_M$ \eqref{eqn-2.4}. From Lemma \ref{Projection error}, the projection error is estimated by 
\begin{equation}\label{eqn-3.16}
     \E\|P_Mf_{N_p} - f_{N_p}\|^2_{L^2(\R)}
    \leq CM^{-s}\E\|f_{N_p}\|^2_{s},
\end{equation}
where $C=C(s)>0$, if $f_{N_p}\in H^s(\R)$. Therefore, in the sequel we only need to estimate the first term on the right-hand side of \eqref{eqn-3.8}. Due to the orthogonality of the generalized Hermite functions, one has 
\begin{equation}\label{eqn-3.11}
    \E\|f_{N_p,M} - P_Mf_{N_p}\|^2_{L^2(\R)}=\E\left(\sum_{m=0}^M|a_m-\hat f_{N_p,m}|^2\right).
\end{equation}
Thus, we shall investigate the coefficients of $f_{N_p,M}$ and $P_Mf_{N_p}$ below. On the one hand, from \eqref{hermite-appro-total}, the coefficients $a_m$ of $f_{N_p,M}$ satisfies 
\begin{equation}\label{eqn-3.6}
    \bb_m=a_{m+1}\sqrt{\frac{m+1}2}-a_{m-1}\sqrt{\frac m2},
\end{equation}
for $m=0,\cdots,M-1$, and 
\begin{equation}\label{eqn-3.7}
    \bb_M=-a_{M-1}\sqrt{\frac M2},\quad\bb_{M+1}=-a_M\sqrt{\frac{M+1}2},
\end{equation}
where $\bb_m$ is defined in \eqref{eqn-3.9}. On the other hand, the coefficients $\hat f_{N_p,m}$ of the solution $f_{N_p}$ in \eqref{eqn-2.3} to \eqref{eqn-BVP-F} also can be expressed in the form \eqref{eqn-3.6} without truncation, i.e. $m=0,1,\cdots$. Consequently, the coefficients $\hat f_{N_p,m}$ of $P_Mf_{N_p}(x)$ satisfies 
\begin{equation}\label{eqn-3.10}
    \bb_m=\hat f_{N_p,m+1}\sqrt{\frac{m+1}2}-\hat f_{N_p,m-1}\sqrt{\frac m2},
\end{equation}
$m=0,1,\cdots,M+1$. From \eqref{eqn-3.6}-\eqref{eqn-3.10}, it yields that
\begin{align}\label{eqn-3.12}
    &|\hat f_{N_p,M}-a_M|=\sqrt{\frac{M+2}{M+1}}|\hat f_{N_p,M+2}|,\\
   & |\hat f_{N_p,M-1}-a_{M-1}|=\sqrt{\frac{M+1}{M}}|\hat f_{N_p,M+1}|,
\end{align}
and
\begin{align}\label{eqn-3.13}\notag
   & |\hat f_{N_p,m}-a_m|=\sqrt{\frac{m+2}{m+1}}|\hat f_{N_p,m+2}-a_{m+2}|\\
        =&\left\{\begin{aligned}
        &\textup{either}\\
            &\phantom{aa}\sqrt{\frac{m+2}{m+1}}\sqrt{\frac{m+4}{m+3}}\cdots\sqrt{\frac{M-1}{M-2}}|\hat f_{N_p,M-1}-a_{M-1}|,\\
        &\textup{or}\\
          & \phantom{aa} \sqrt{\frac{m+2}{m+1}}\sqrt{\frac{m+4}{m+3}}\cdots\sqrt{\frac{M}{M-1}}|\hat f_{N_p,M}-a_{M}|.
            \end{aligned}\right.
\end{align}
Substituting \eqref{eqn-3.12}-\eqref{eqn-3.13} back to \eqref{eqn-3.11}, one has 
\begin{align}\label{eqn-3.14}\notag
    & \E\|f_{N_p,M} - P_Mf_{N_p}\|^2_{L^2(\R)}\\\notag
     \leq&M\sum_{m=0}^{M-2}\frac 1{m+1}\E|\hat f_{N_p,M+1}|^2+\frac{M+2}{M+1}\E|\hat f_{N_p,M+2}|^2\\
     \leq& CM\log M\E|\hat f_{N_p,M+1}|^2+2\E|\hat f_{N_p,M+2}|^2.
\end{align}
for some generic constant $C>0$, which may vary from line to line. If $f_{N_p}\in H^s(\R)$, then from \eqref{eqn-2.2}, 
\begin{equation}\label{eqn-3.15}
    |\hat f_{N_p,M+1}|^2\leq (M+2)^{-s}\|f_{N_p}\|_s^2,
\end{equation}
so does $|\hat f_{N_p,M+2}|^2$. Substituting \eqref{eqn-3.15} back to \eqref{eqn-3.14}, one has
\begin{equation}\label{eqn-3.17}
    \E\|f_{N_p,M} - P_Mf_{N_p}\|^2_{L^2(\R)}\leq CM^{-s+1}\log M\E\|f_{N_p}\|_s^2.
\end{equation}
Equation \eqref{eqn-3.18} follows immediately by substituting \eqref{eqn-3.16} and \eqref{eqn-3.17} back to \eqref{eqn-3.8}.
\end{proof}

\subsection{The feedback particle filter (FPF) algorithm}

In Section \ref{sec-3.1}, we introduced the Hermite-Galerkin method to approximation the gain function in the FPF, which is crucial. The whole procedure of FPF is first to discretize the total experimental time $[0,T]$ by step size $\Delta t$. The gain function is approximated at each time instant $t_k=k\Delta t$, $k=0,\cdots,\lfloor\frac T{\Delta t}\rfloor$. During each time interval $[t_k,t_{k+1})$, the gain function is assumed unchange with respect to time.

The FPF with gain function approximated by Hermite-Galerkin  method is detailed in Algorithm \ref{alg-1}.
\begin{algorithm}[h!]
\caption{The FPF with gain function approximated by Hermite-Galerkin method}
\begin{algorithmic}[1]\label{alg-1}
    \STATE \% Initialization
        \FOR{$i=1$ to $N_p$}
		  \STATE{Sample $X_0^i$ from $p_0(x)$}
		\ENDFOR
    \STATE \% The FPF
    \FOR{$k=0$ to $k=\lfloor T/\Delta t\rfloor$}
		\STATE Approximate $\hat{h}^{N_p}$ in \eqref{eqn-hat hN} by sample mean $\frac1{N_p}\sum_{i=1}^{N_p}h(X_{t_k}^i)$.
		\FOR{$i=1$ to $N_p$}
            \STATE Generate $N_p$ independent samples of $\Delta B^{i}_{t_k}$ from $\N(0,\Delta t)$
            \STATE \% The Hermite-Galerkin method for the gain function
            \STATE Calculate $K$ and $u$ for the $i$-th particle $X_{t_k}^{i}$ by \eqref{K} and \eqref{eqn-u}, respectively
            \STATE Evolve the particles $\{X_{t_k}^{i}\}_{i = 1}^{N_p}$ according to \eqref{eqn-2.20}, i.e.
            \begin{align*}
                X_{t_{k+1}}^{i}=&X_{t_k}^{i}+g(X_{t_k}^i)\Delta t+\sigma(X_{t_k}^i)\Delta B_{t_k}^i\\
                &+K(X_{t_k}^i,t)\Delta Z_{t_k}+u(X_{t_k}^i,t)\Delta t.
            \end{align*} 
        \ENDFOR
    \ENDFOR
\end{algorithmic}
\end{algorithm}

\section{Numerical simulations}\label{sec-4}

Three experiments are conducted: 1) Compare the Hermite-Galerkin approximated gain function with the exact one (when \(p(x)\) is known); 2) Numerically validate the theoretical error estimates in Section \ref{sec-3.2}; 3) Solve a NLF problem and compare the averaged root mean square error (ARMSE) \eqref{eqn-4.2} of the FPF with our method, constant gain, and kernel-based gain approximations. A Gaussian kernel is used for the kernel density estimator \eqref{kernel} throughout.

\subsection{The comparison of the gain functions}\label{sec-4.1}

For the scalar case, the exact gain function is obtainable via direct integration: 
\begin{equation}\label{eqn-4.1}
    K(x)=-\frac1{p(x)}\int_{-\infty}^x\left(h(y)-\hat h\right)p(y)dy,
\end{equation}
which serves as a benchmark for evaluating approximation accuracy.

\begin{example}[Section III, \cite{TM:16}]\label{ex-1}
    Suppose $p$ is a mixture of two Gaussian distributions, given by $\frac12\N(-\mu,\sigma^2)+\frac12\N(\mu,\sigma^2)$, where $\mu=1$ and $\sigma^2=0.2$. The observation is $h(x)=x$. Let the number of particles $N_p=200$ and the bandwidth $\epsilon=0.5$.
\end{example}

In Fig. \ref{fig-1}, we display the gain functions obtained from Hermite-Galerkin method for $M=1,4$ and $7$, respectively. As the spectral truncation order $M$ increases, the gain approximation gets closer to the exact one.

\begin{figure}[htbp!]
    \centering
    \includegraphics[width=0.5\textwidth, trim = 10 190 10 210,clip]{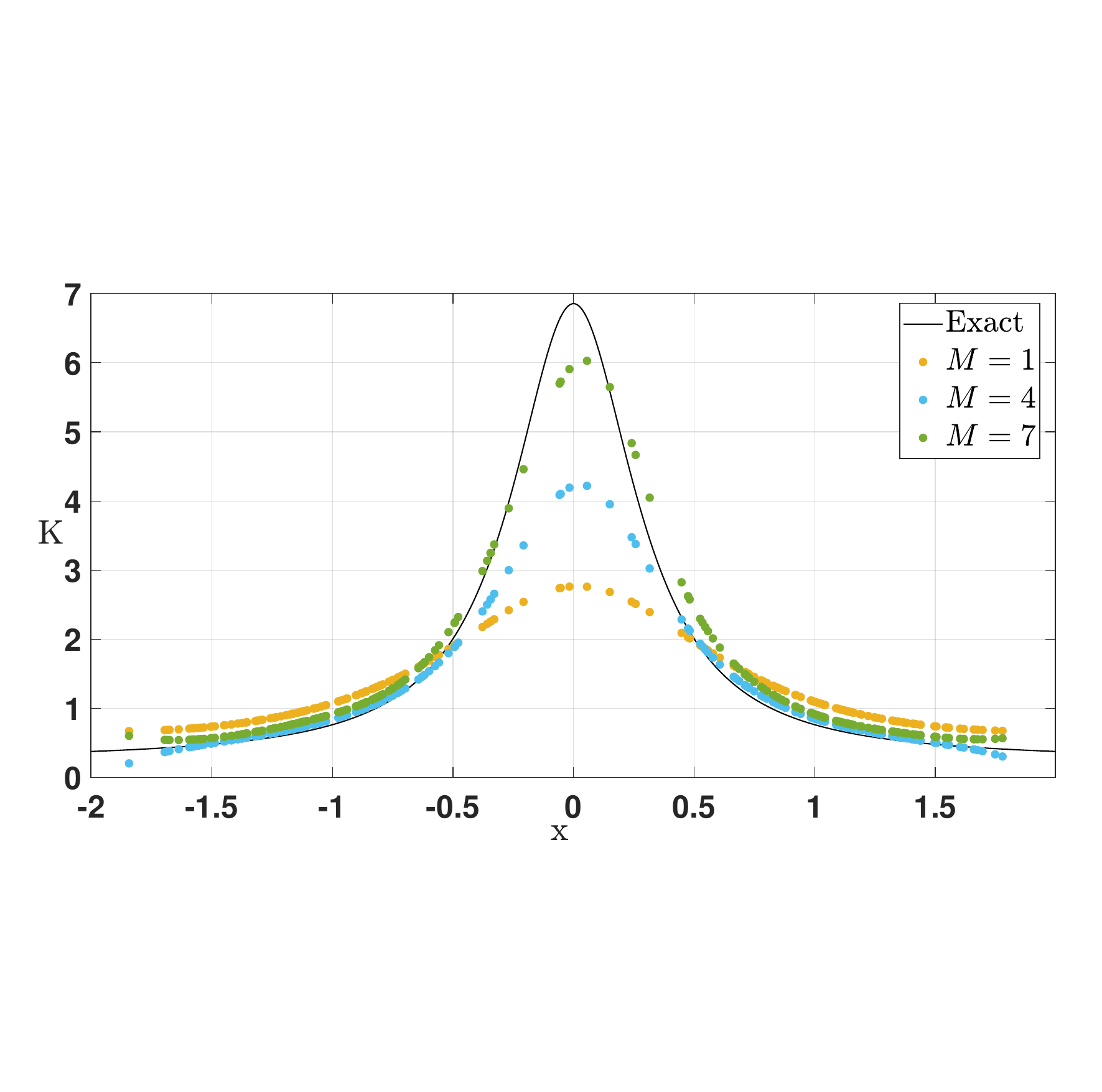}
    \caption{Comparison of the exact solution and its approximations by Hermite-Galerkin method with $M = 1,4$ and $7$.}\label{fig-1}
\end{figure}

\subsection{Validations of the theoretical error estimates}

Two tests validate the error estimates in Theorems \ref{thm-3.1} and \ref{thm-3.2}, using the bi-modal distribution from Example \ref{ex-1}. 
\begin{example}\label{ex-2}
   \begin{enumerate}
        \item[1)] Set the number of particles to be $N_p=200$. The $L^2$-errors between the true gain function and the approximated one obtained by the Hermite-Galerkin method, corresponding to different truncations $M=2,4,6,8$ and $10$, are recorded.
        \item[2)] Set the truncation $M=10$. The $L^1$-errors of those corresponding to varying number of particles $N_p=10, 30, 50, 100$, and $200$ are recorded.
   \end{enumerate}
\end{example}
Fig. \ref{fig-2} presents results as a log-log plot. The Gaussian kernel (order \(s=2\), \(\int_\mathbb{R} u^2K_\epsilon(u)du\neq0\)) aligns with Theorem \ref{thm-3.2}, with the theoretical \(L^2\)-error bound \(M^{-1}\log M\) (dashed line) matching the experimental error. The bottom subplot’s fitted slope (\(-0.4193\)) matches the theoretical rate \(N_p^{-0.4}\) (\(s=2\)).
\begin{figure}[h!]
    \centering
    \includegraphics[width=0.5\textwidth, trim = 0 190 10 205,clip]{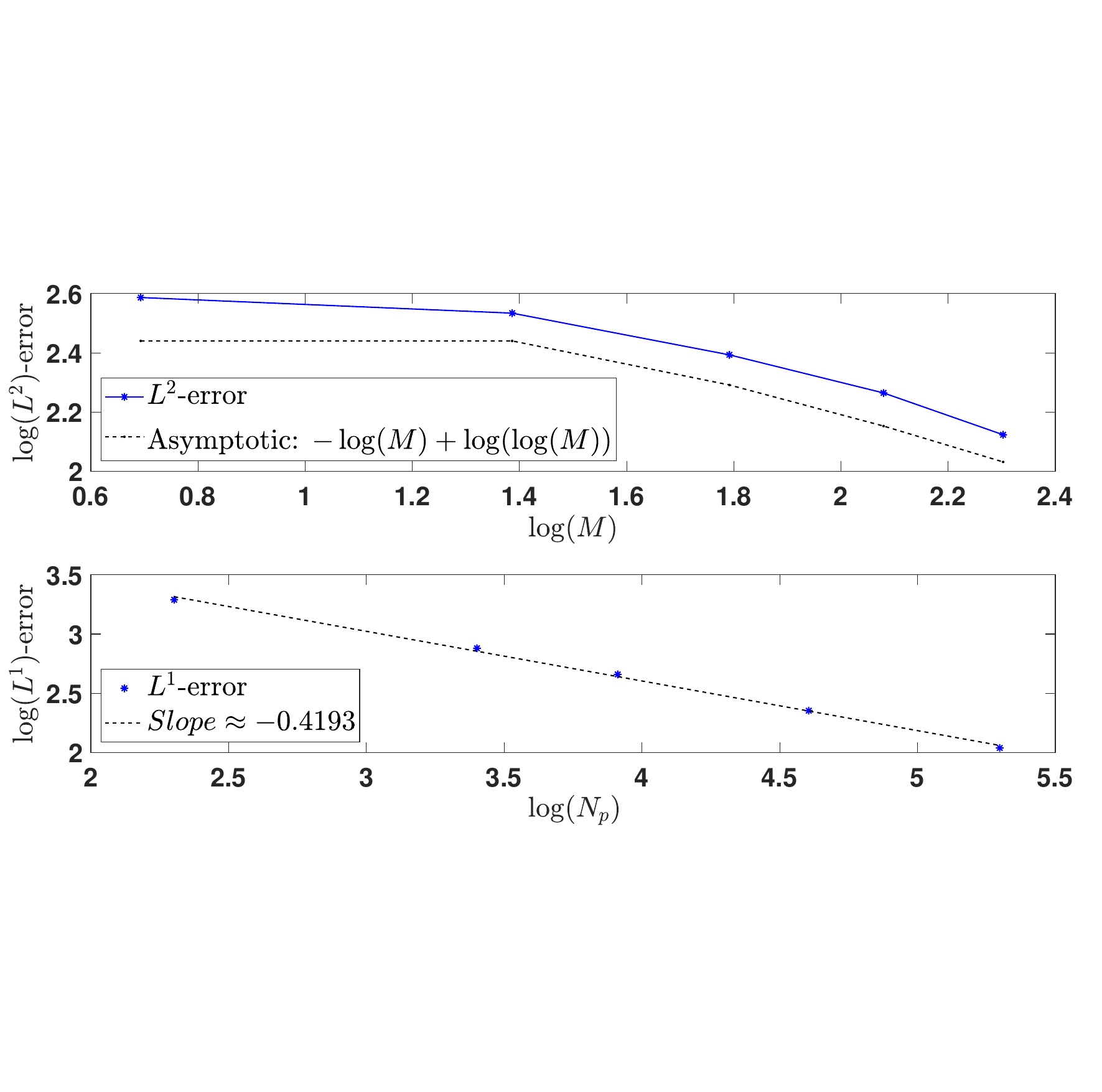}
    \caption{The log-log plot of the error estimates of the gain function approximated by the Hermite-Galerkin method. Top: the $\log(L^2)$-error v.s. $\log M$; Bottom: the $\log(L^1)$-error v.s. $\log N_p$.}\label{fig-2}
\end{figure}

\subsection{Nonlinear Example}\label{sec-4.3}

A NLF problem with state transitions between \(\pm1\) illustrates the algorithm's tracking capability.

 \begin{example}[Section V.B, \cite{YMM:13}]
\begin{equation}\label{NE1}
\left\{\begin{aligned}
dX_t=&X_t(1-X_t^2)dt+dB_t\\
dZ_t=&X_tdt+dW_t
\end{aligned}\right. .
\end{equation}
\end{example}

In this experiment, the total experimental time is $T=400$, with time step $\Delta t = 0.01$. The covariances of both the state and observation processes are $0.4$. Initially, the true state is around $X_0\approx0.1$. We generate $N_p=10$ particles from $\N(0,1)$. The realization of the true state is generated by the Euler-Maruyama method according to the first equation in \eqref{NE1}. The truncation is set to $M=6$ and the bandwidth is $\epsilon= 0.5$. From \eqref{eqn-3.6}-\eqref{eqn-3.7}, the coefficients $a_m$ in the Hermite-Galerkin approximation can be obtained backward recursively. Then the approximate gain function is obtained by \eqref{K}.

We illustrate the performance of FPF for one realization of (\ref{NE1}). The transition of true state between $\pm1$ can be obviously observed. Fig. \ref{fig-3} shows one realization of the state trajectory and its estimations from the FPF with three different gain approximations: the constant-gain approximation \cite{YLMM:13}, the kernel-based method \cite{TM:16,TMM:20}, and our Hermite-Galerkin method. Our method provides accurate tracking even at the sharp state transitions.

\begin{figure}[htbp!]
	\centering
	\includegraphics[width=0.5\textwidth, trim = 5 190 10 200,clip]{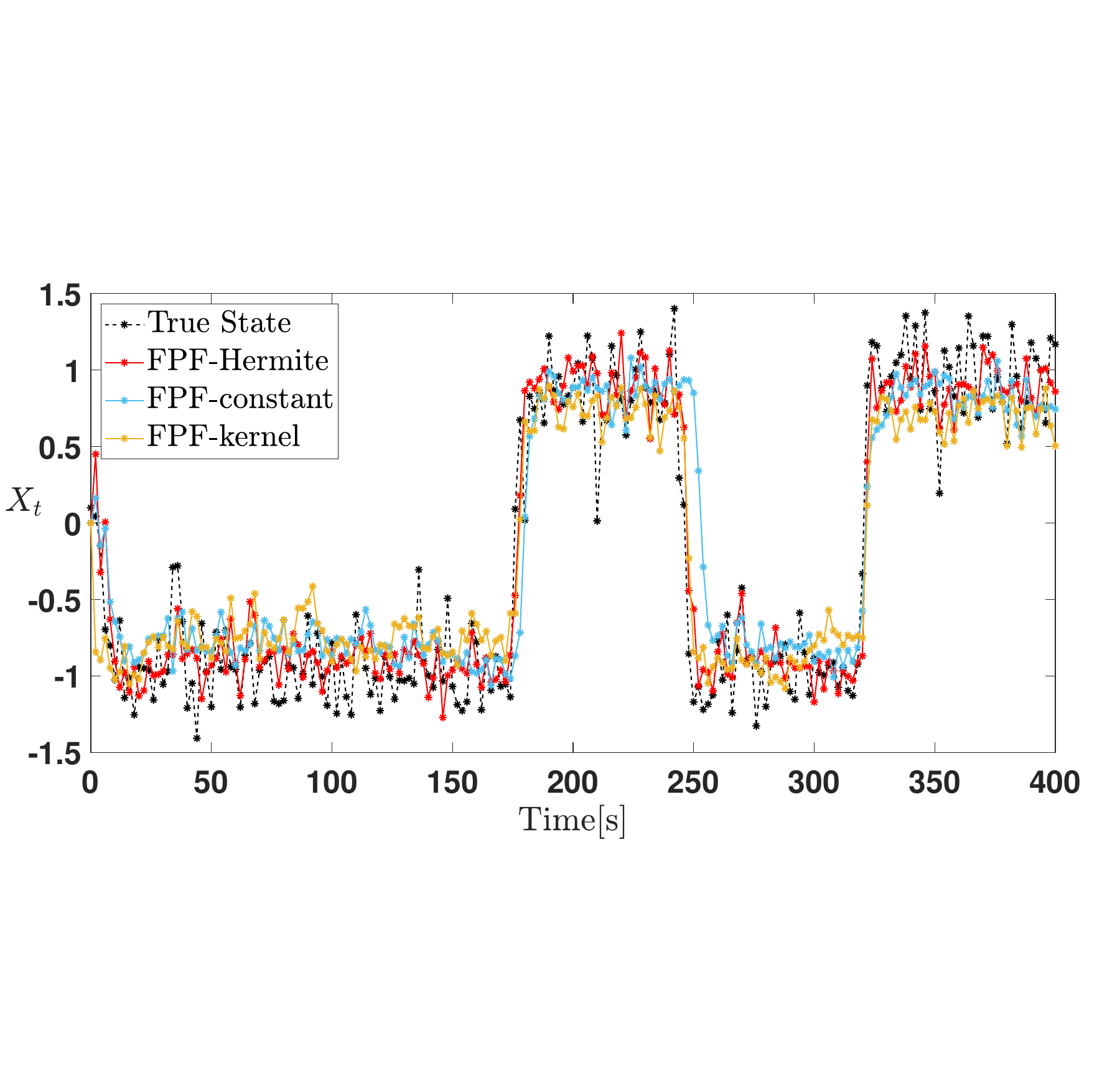}
	\caption{The estimations of the true state (black) obtained by the FPF with the  Hermite-Galerkin spectral method (red), the kernel-based approach (yellow) and the constant-gain approximation (blue), respectively.}
	\label{fig-3}
\end{figure}

 To assess the robustness of the algorithm, we perform $M_c=100$ Monte Carlo (MC) simulations, and the ARMSE are presented in Fig. \ref{fig-4}. The ARMSE is defined as 
 \begin{equation}\label{eqn-4.2}
    \textup{ARMSE}:=\frac1{M_c}\sum_{j=1}^{M_c}\textup{RMSE}_j,
 \end{equation}
 where RMSE$_j$ is the RMSE of the $j$-th MC run, and 
 the RMSE for a single run is defined as $\textup{RMSE}:=\left[\sum\limits_{k=0}^{\lfloor T/\Delta t\rfloor}(X_{t_k}-\hat X_{t_k})^2\right]^\frac{1}{2}$, where $X_{t_k}$ and $\hat X_{t_k}$ are the true state and the estimation at time $t_k$, respectively. 
 \begin{figure}[h!]
	\centering
	\includegraphics[width=0.5\textwidth, trim = 10 200 10 210,clip]{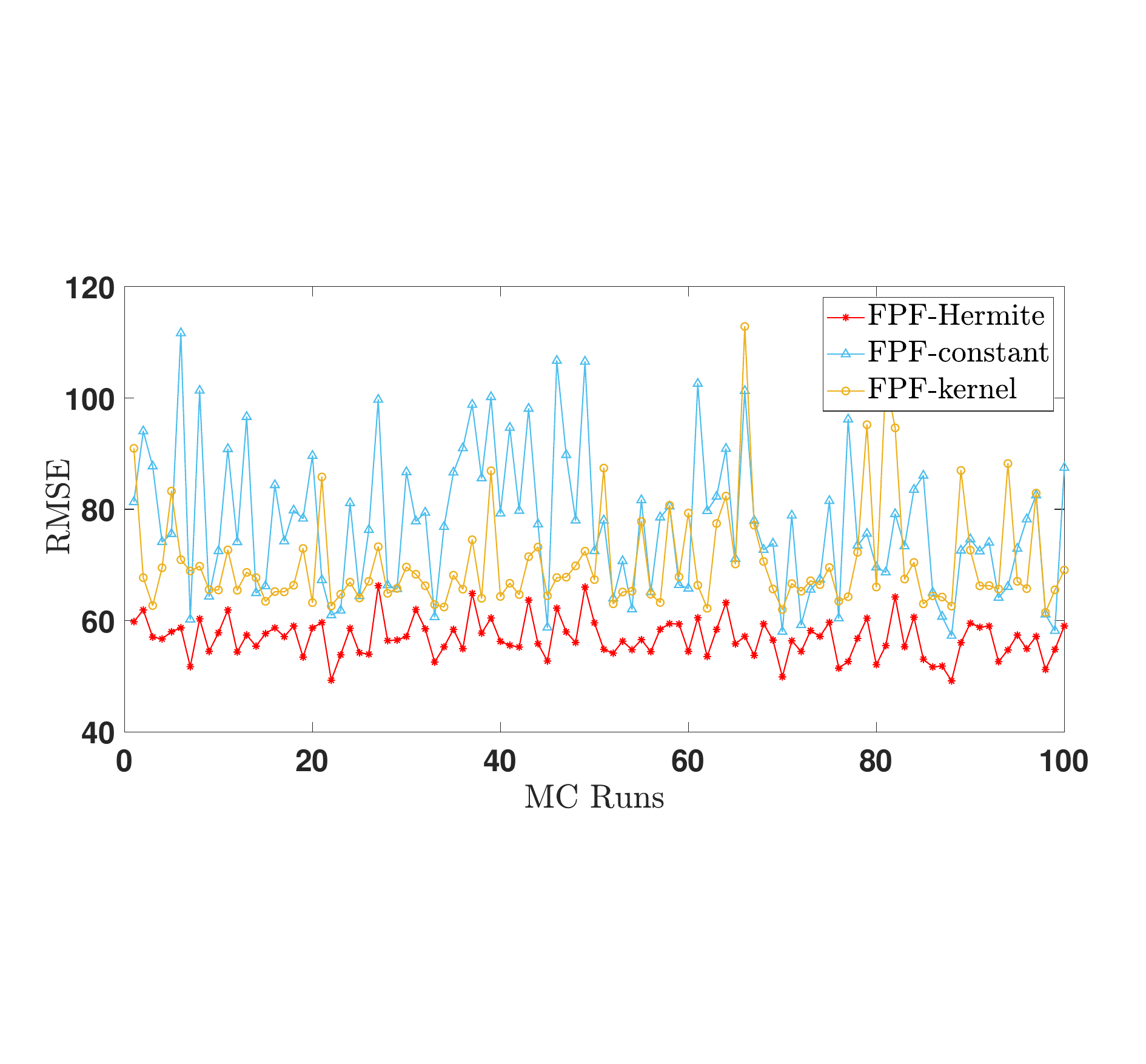}
	\caption{The RMSEs of each MC run of the three FPFs.}
	\label{fig-4}
\end{figure}

Table \ref{tab:comp} summarizes the ARMSEs and CPU times over $100$ trials of three FPF algorihtms.
 \begin{table}[h!]
    \centering
\begin{tabular}{|c|c|c|}
\hline
        & ARMSE & CPU times (s) \\
\hline
         Hermite-Galerkin spectral method & ${\bf 56.8836}$ & $3.0584$\\
    \hline
    kernel-based approach & $70.4475$ & $13.0531$\\
    \hline
    constant-gain approximation & $77.3933$ & ${\bf 0.1506}$\\
    \hline
\end{tabular}
\caption{Comparison of three FPFs in terms of ARMSE and CPU times.}
\label{tab:comp}
    \end{table}
The Hermite-Galerkin method achieves nearly $19\%$ and $26\%$ reductions in ARMSE compared to kernel-based approach and the constant-gain approximation, respectively. Regarding the computational efficiency, our method achieves a $77\%$ reduction in CPU times compared to the kernel-based approach, making it highly suitable for real-time NLF problems.

\section{Conclusions and future works}\label{sec-5}

In this paper, we have developed a novel Hermite-Galerkin spectral method to address the challenge of numerically solving for the gain function arising in the FPF. To overcome the two core difficulties (i.e. the intractability of the true conditional state density \(p_t\) and the lack of a closed-form solution to the BVP \eqref{eqn-BVP}), we proposed a two-step approximation framework: first, approximating \(p_t\) with a kernel density estimator; second, approximating the auxiliary variable \(p_t^{N_p,\epsilon}K\) via the Galerkin spectral method using generalized Hermite functions. 
We rigorously established two error bounds to quantify the approximation accuracy of the proposed method. Theorem \ref{thm-3.1} derived the pointwise error estimate for the kernel density approximation, showing that the expected $L^1$-error decays at the rate \(\O(N_p^{-\frac{s}{2s+1}})\) with respect to the number of particles \(N_p\), where \(s\) indicates the regularity of the true density. Theorem \ref{thm-3.2} further established the expected \(L^2\)-error bound for the spectral approximation, demonstrating that it decays at the rate \(\O(M^{-s+1}\log M)\) with respect to the number of basis functions \(M\), while inheriting the regularity of the solution to the BVP. These two error estimates provide complete theoretical guarantees for the convergence and accuracy of the proposed Hermite-Galerkin method.

Comprehensive numerical experiments were conducted to validate the theoretical error bounds and evaluate the practical performance of the method. The results confirmed that the proposed method achieves superior approximation accuracy and computational efficiency compared to existing gain function approximation schemes for the FPF, at least in the scalar and nonlinear scenarios.

Future research will focus on extending the current framework to multivariate NLF problems, exploring adaptive basis selection strategies to further improve computational scalability, and integrating the method with advanced sampling techniques to enhance the performance of FPF in real-world applications.

\bibliographystyle{ieeetr}
\bibliography{references}

@book{T:09,
    author = {Alexandre B. Tsybakov},
    title = {Introduction to Nonparametric Estimation},
    publisher = {Springer New York},
    year = {2009} 
}

@Conference{B:18,
	 title = {Comparison of gain function approximation methods in the feedback particle filter},
	author = {Berntorp, K.},
	booktitle = {2018 21th International Conference on Information Fusion},
	year = {2018},
	pages = {123--130},
}

@Conference{BG:16,
	 title = {Data-driven gain computation in the feedback particle filter},
	author = {Berntorp, K. and Grover, P.},
	booktitle = {Proceedings of the American Control Conference},
	year = {2016},
	pages = {2711--2716},
}

@Article{SKP:19,
  title =	{ How to avoid the curse of dimensionality: scalability of particle filters with and without importance weights},
  author =	 {Surace, S. and Kutschireiter, A. and Pfister, J.-P.},
  journal =	 {SIAM Review},
  volume =	 {61},
	number  = {1},
  year =	 2019,
  pages =	 {79-91},
}

@Conference{TM:16,
	 title = {Gain function approximation in the feedback particle filter},
	author = {Taghvaei, A. and Mehta, P.},
	booktitle = {Proceedings of 2016 IEEE Conference on Decision and Control},
	year = {2016},
	pages = {5446--5452},
}

@Article{TMM:20,
  title =	 {Diffusion map-based algorithm for gain function approximation in the feedback particle filter},
  author = {Taghvaei, A. and Mehta, P. and Meyn, S.},
 journal =	 {SIAM/ASA Journal on Uncertainty Quantification},
  volume =	 {8},
	number  = {3},
  year =	 2020,
 pages = {1090--1117},
}

@Article{WL:25,
  title =	 {A decomposition method in the multivariate feedback particle filter via tensor product Hermite polynomials},
  author = {Wang, R. and Luo, X.},
 journal =	 {arXiv:2511.01227v1},
  volume =	 {},
	number  = {},
  year =	 2025,
}

@ARTICLE{YMM:13,
  author={Yang, T. and Mehta, P. and Meyn, S.},
  journal={IEEE Transactions on Automatic Control}, 
  title={Feedback Particle Filter}, 
  year={2013},
  volume={58},
  number={10},
  pages={2465-2480},
  }

@Article{YLMM:16,
  title =	 {Multivariable feedback particle filter},
  author =	 {Yang, T. and Laugesen, R. and Mehta, P. and Meyn, S.},
  journal =	 {Automatica},
  volume =	 {71},
	number  = {},
  year =	 2016,
  pages =	 {10--23},
}

@INPROCEEDINGS{YLMM:13,
  author={Yang, T. and Laugesen, R. and Mehta, P. and Meyn, S.},
  booktitle={Proceedings of 2012 IEEE Conference on Decision and Control}, 
  title={Multivariable feedback particle filter}, 
  year={2012},
  volume={},
  number={},
  pages={4063-4070},
  }

@INPROCEEDINGS{WML:25,
      title={A Decomposition Approach for the Gain Function in the Feedback Particle Filter}, 
      author={R. Wang and H. Miao and X. Luo},
      booktitle = {Proceedings of 2025 IEEE Conference on Decision and Control},
      year={2025},
      pages={2378-2384}
}

@article{SY:23,
title = {Solving nonlinear filtering problems with correlated noise based on {H}ermite-{G}alerkin spectral method},
journal = {Automatica},
author = {Sun, Z. and Yau, S. S.-T.},
volume = {156},
year = {2023},
}

@article{LY:13HSM,
title = {Hermite spectral method to 1-D forward kolmogorov equation and its application to nonlinear filtering problems},
journal = {IEEE Transactions on Automatic Control},
author = {Luo, X. and Yau, S. S.-T.},
volume = {58},
number = {10},
year = {2013},
pages = {2495 - 2507},
}

@article{LY:13SIAM,
title = {Hermite spectral method with hyperbolic cross approximations to high-dimensional parabolic {PDE}s},
journal = {SIAM Journal on Numerical Analysis},
author = {Luo, X. and Yau, S. S.-T.},
volume = {51},
number = {6},
year = {2013},
pages = {3186 - 3212},
}

@article{DLY:21,
title = {Solving nonlinear filtering problems in real time by {L}egendre {G}alerkin spectral method},
journal = {IEEE Transactions on Automatic Control},
author = {Dong, W. and Luo, X. and Yau, S. S.-T.},
volume = {66},
number = {4},
year = {2021},
pages = {1559 - 1572},
}

@article{ZZCZC:22,
title = {Splitting-up Spectral Method for Nonlinear Filtering Problems with Correlation Noises},
journal = {Journal of Scientific Computing},
author = {Zhang, F. and Zou, Y. and Chai, S. and Zhang, R. and Cao, Y.},
volume = {93},
number = {1},
year = {2022},
}

@article{SJY:25,
title = {{DGLG}: A Novel Deep Generalized {L}egendre-{G}alerkin Approach to Optimal Filtering Problem},
journal = {IEEE Transactions on Automatic Control},
author = {Shi, J. and Jiao, X. and Yau, S. S.-T.},
volume = {70},
number = {4},
year = {2025},
pages = {2584 - 2590},
}

@book{STW:11,
    author = {Shen, J. and Tang, T. and Wang, L.},
    title = {Spectral Methods: Algorithms, Analysis and Applications},
    publisher = {Springer Berlin, Heidelberg},
    year = {2011}
}

@book{DG:85,
    author = {Devroye, L. and Györfi, L.},
    title = {Nonparametric Density Estimation: The $L_1$ View.},
    publisher = {John Wiley, New York.},
    year = {1985}
}

\section{Appendix}

\begin{proof}[Sketch of the proof of Lemma \ref{L1 Error Bound for Kernel Density Estimation}]
We start from 
\begin{align}\label{eq:decomp}\notag
    &\|p^{N_p,\epsilon_{N_p}} - p\|_{L^1}^2\\
    \leq& 2\|\mathbb{E}p^{N_p,\epsilon_{N_p}} - p\|_{L^1}^2 + 2\|p^{N_p,\epsilon_{N_p}} - \mathbb{E}p^{N_p,\epsilon_{N_p}}\|_{L^1}^2.
\end{align}
%where the first term is the squared $L^1$ of the bias and the second one is that of the variance. We shall estimate them one-by-one.
\textbf{Bias term.}
For a kernel of order $s$, the standard bias expansion (using vanishing moments and Taylor expansion) gives
\begin{equation*}
    \|\mathbb{E}p^{N_p,\epsilon_{N_p}} - p\|_{L^1} \le C_s \epsilon_{N_p}^s \int_{\mathbb{R}} |L(z)|dz (1+o(1)).
\end{equation*}
Squaring and taking expectation on the both sides, we obtain
\begin{equation}\label{eq:bias}
    \mathbb{E}\|\mathbb{E}p^{N_p,\epsilon_{N_p}} - p\|_{L^1}^2 \le C_s^2 \Bigl(\int_{\mathbb{R}} |L(z)|dz\Bigr)^2 \epsilon_{N_p}^{2s}(1+o(1)).
\end{equation}

\textbf{Variance term.}
For any $x$, the variance of the estimator satisfies
\begin{align}\label{eqn-V1}
   & \operatorname{Var}\bigl(p^{N_p,\epsilon_{N_p}}(x)\bigr) \\\notag
    \le &\frac{1}{N_p}\mathbb{E}\bigl[K_{\epsilon_{N_p}}(x-X)^2\bigr]
    = \frac{1}{N_p} \int K_{\epsilon_{N_p}}(x-y)^2 p(y)dy.
\end{align}
Thus,
\begin{align}\label{eqn-V2}\notag
    &\mathbb{E}\|p^{N_p,\epsilon_{N_p}}-\mathbb{E}p^{N_p,\epsilon_{N_p}}\|_{L^1}^2
\le \Bigl( \int \sqrt{\operatorname{Var}(p^{N_p,\epsilon_{N_p}}(x))}\,dx \Bigr)^2\\
    \overset{\eqref{eqn-V1}}\leq& \Bigl( \int \sqrt{ \frac{1}{N_p}\int K_{\epsilon_{N_p}}(x-y)^2 p(y)dy }dx \Bigr)^2\\\notag
   \le &\frac{1}{N_p} \Bigl( \frac{\alpha}{\sqrt{\epsilon_{N_p}}} \int \sqrt{p(y)}dy \Bigr)^2
= \frac{\alpha^2}{N_p\epsilon_{N_p}} \Bigl( \int \sqrt{p(y)}dy \Bigr)^2,
\end{align}
with $\alpha:=\|K\|_{L^2(\R)}^2$, where the first inequality is due to Minkowski's inequality for integrals, i.e.
\begin{align*}  
    &\sqrt{\mathbb{E}\Bigl[\Bigl(\int |p^{N_p,\epsilon_{N_p}}(x)-\mathbb{E}p^{N_p,\epsilon_{N_p}}(x)|dx\Bigr)^2\Bigr]}\\
\le &\int \sqrt{\mathbb{E}[(p^{N_p,\epsilon_{N_p}}(x)-\mathbb{E}p^{N_p,\epsilon_{N_p}}(x))^2]}dx \\
= &\int \sqrt{\operatorname{Var}(p^{N_p,\epsilon_{N_p}}(x))}dx.
\end{align*}
and the third one is from 
\begin{align*}
    &\int \sqrt{ \int K_{\epsilon_{N_p}}(x-y)^2 p(y)dy }dx\\
    \le& \int \sqrt{p(y)} \Bigl( \int K_{\epsilon_{N_p}}(x-y)^2 dx \Bigr)^{1/2} dy\\
    \leq&\frac{\alpha}{\sqrt{\epsilon_{N_p}}} \int \sqrt{p(y)}dy.
\end{align*}
Consequently,
\begin{equation}
    \begin{aligned}\label{eq:variance}
    &\mathbb{E}\|p^{N_p,\epsilon_{N_p}} - \mathbb{E}p^{N_p,\epsilon_{N_p}}\|_{L^1}^2\\
\le& \alpha^2 \Bigl( \int \sqrt{p(z)}dz \Bigr)^2 \frac{1}{N_p\epsilon_{N_p}} (1+o(1)).
\end{aligned}
\end{equation}
The result \eqref{eq:l1-square-bound} follows immediately, by substituting \eqref{eq:bias} and \eqref{eq:variance} back into \eqref{eq:decomp}. Moreover, the optimal rate is immediately obtained by substituting $\epsilon_{N_p^*}$ in \eqref{eq:optimal-bandwidth} into \eqref{eq:l1-square-bound}.
\end{proof}

\addtolength{\textheight}{-12cm} 
\end{document}